\journalname{JOTA}
\DeclareMathOperator*{\st}{s.t.}
\DeclareMathOperator*{\co}{co}
\newcommand{\vertiii}[1]{{\left\vert\kern-0.25ex\left\vert\kern-0.25ex\left\vert #1
    \right\vert\kern-0.25ex\right\vert\kern-0.25ex\right\vert}}
\journalname{}
\begin{document}

\title{On the rate of convergence of the Difference-of-Convex Algorithm (DCA)}

\author{Hadi Abbaszadehpeivasti   \and  Etienne de Klerk \and Moslem Zamani}

\institute{Hadi Abbaszadehpeivasti \at
              h.abbaszadehpeivasti@tilburguniversity.edu
           \and
           Etienne de Klerk, Corresponding author  \at
              e.deklerk@tilburguniversity.edu
           \and
           Moslem Zamani,    \at
              m.zamani\_1@tilburguniversity.edu
            \and
           Tilburg University, Department of Econometrics and Operations Research\\
              Tilburg, The Netherlands
}

\date{Received: date / Accepted: date}

\maketitle

\begin{abstract}
In this paper, we study the non-asymptotic convergence rate of the DCA (Difference-of-Convex Algorithm), also known as the convex-concave procedure,
with two different termination criteria that are suitable for smooth and nonsmooth decompositions respectively.
The DCA is a popular algorithm for difference-of-convex (DC) problems, and known to
converge to a stationary point of the objective under some assumptions.
 We derive a worst-case convergence rate of $O(1/\sqrt{N})$ after $N$ iterations of the objective gradient norm  for certain classes of  DC problems,
  without assuming strong convexity in the DC decomposition, and give an example which shows the convergence rate is exact. We also provide a new convergence rate of $O(1/N)$ for the DCA
  with the second termination criterion.
Moreover, we derive a new linear convergence rate result for the DCA under the assumption of the Polyak-\L ojasiewicz inequality.
The novel aspect of our analysis is that it employs semidefinite programming performance estimation.
\end{abstract}
\keywords{Convex-concave procedure \and Difference-of-convex problems \and Performance estimation \and Worst-case convergence \and Semidefinite programming}


\section{Introduction}
In this paper, we consider the general difference-of-convex (DC) optimization problem,
 \begin{align}    \label{P}
        \inf \  &f(x):=f_1(x)-f_2(x)\\
        \nonumber \text{s.t.}\ & x\in \mathbb{R}^n,
    \end{align}
where $f_1, f_2$ are extended convex functions on $\mathbb{R}^n$ and $f$ is an extended  lower-semicontinuous function on $\mathbb{R}^n$.
 Throughout the paper, we assume that the infimum in problem \eqref{P} is finite, and denote by $f^\star$ a lower bound of $f$ on $\mathbb{R}^n$.

DC problems appear naturally in many applications, e.g. power allocation in digital communication systems \cite{alvarado2014new}, production-transportation planning \cite{holmberg1999production},
  location planning \cite{chen1998solution}, image processing \cite{lou2015weighted},
   sparse signal recovering \cite{gasso2009recovering}, cluster analysis \cite{bagirov2018nonsmooth,bagirov2016nonsmooth},
   and supervised data classification \cite{astorino2012margin,le2017dca}, to name but a few.

This wide range of applications is to be expected, since some important classes of nonconvex functions may be represented as DC functions.
  For instance, twice continuously differentiable functions on any convex subset of $\mathbb{R}^n$ \cite{hartman1959functions},
  and continuous piece-wise linear functions \cite{melzer1986expressibility} may be written as DC functions.
  Furthermore, every continuous function on a compact and convex set can be approximated by a DC function \cite{horst1999dc,tuy1998convex}.
  We refer the interested reader to \cite{hiriart1985generalized,tuy1998convex} for more information on DC representable functions.

The celebrated Difference-of-Convex Algorithm (DCA), also known as the convex-concave procedure, has been applied extensively
 to problem \eqref{P}; see \cite{le2018dc,Boyde,tao1997convex} and the references therein.
  Algorithm \ref{Alg} presents the basic form of the DCA.

\begin{algorithm}
\caption{DCA}
\label{Alg}

Pick $x^1\in\mathbb{R}^n$.\\
For $k=1, 2, \ldots$ perform the following steps:
\begin{enumerate}
\item
Choose $g_2^k\in\partial f_2(x^k)$.
\item
Choose
\begin{align}\label{S_P}
  x^{k+1}\in \text{argmin}_{x\in\mathbb{R}^n} f_1(x)-f_2(x^k)-\langle g_2^k, x-x^k\rangle.
\end{align}
\item
If the termination criteria are satisfied, then stop.
\end{enumerate}
\end{algorithm}

%
In the description of the DCA in Algorithm \ref{Alg}, (sub)gradients of $f_1$ and $f_2$ are assumed to be available at given points, the so-called black-box formulation.
The DCA is sometimes also presented as a primal-dual method, where a dual sub-problem is solved to obtain the required (sub)gradients;
 see \cite{le2018dc,Boyde} for further discussions of this topic.
In recent years, some scholars have also extended the DCA  and proposed some new variations; see \cite{gotoh2018dc,lu2019nonmonotone,lu2019enhanced,pang2017computing,sun2021algorithms}.

 The first convergence results for Algorithm \ref{Alg} were given in \cite[Theorem 3(iv)]{tao1997convex}.
 The authors showed that, if the sequence of iterates $\{x^k\}$ is bounded, then each accumulation point of this sequence  is a critical
 point of $f$.

 Le Thi et al. \cite{le2018convergence} established an asymptotic
 linear convergence rate of $\{x^k\}$ under some conditions, in particular under the assumption that
$f$  satisfies the \L{}ojasiewicz  gradient inequality at all stationary points. Recall that a differentiable
 function $f$ is said to satisfy this inequality at a stationary point $a$ ($\nabla f(a) = 0$),
 if there exist constants $\theta \in (0,1)$, $C > 0$ and $\epsilon >0$ such that
\begin{equation}\label{Lojasiewicz}
|f(x) - f(a)|^\theta \le C\|\nabla f(x)\| \mbox{ if $\|x-a\| \le \epsilon$},
\end{equation}
where the constant $\theta$ is called the \L{}ojasiewicz exponent. This inequality is known to hold, for example, for real analytic functions,
 but has been extended to include classes of non-smooth functions as well by considering general sub-differentials instead of gradients; see \cite{Bolte2007,bolte2014proximal},
 and the references therein.

The convergence rates established by Le Thi et al. \cite{le2018convergence} depend on the value of the \L{}ojasiewicz exponent, as the following theorem shows.
The theorem stated here is a special case of Theorems 3.4 and 3.5 in \cite{le2018convergence}, to give a flavor of the convergence results in \cite{le2018convergence}.

\begin{theorem}[Theorems 3.4 and 3.5 in Le Thi et al.\ \cite{le2018convergence}]\label{le2018convergence}
Let $f_1$ and $f_2$ be proper convex functions and let the domain of $f$ be closed.
Also assume that at least one of $f_1$ and $f_2$ is strongly convex, and $f_1$ or $f_2$ is differentiable with locally Lipschitz gradient in every critical point of the DC problem. Finally, assume the sequence $\{x^k\}$ is bounded, and let $x^\infty$ be a limit point of $\{x^k\}$.
Then $x^\infty$ is also a stationary point. Moreover, if $f$ satisfies the \L{}ojasiewicz  gradient inequality \eqref{Lojasiewicz} at all stationary points, then
  \begin{enumerate}
    \item if $\theta\in(1/2,1)$, then $\|x^k-x^\infty\| \leq ck^{\tfrac{1-\theta}{1-2\theta}}$ for some $c>0$.
    \item if $\theta\in(0,1/2]$, then $\|x^k-x^\infty\|\leq cq^k$ for some $c>0$ and $q\in(0,1)$.
  \end{enumerate}
\end{theorem}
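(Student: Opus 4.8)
The plan is to fit the DCA into the standard Kurdyka--\L{}ojasiewicz descent framework (in the spirit of Attouch--Bolte and Bolte--Sabach--Teboulle): derive a \emph{sufficient-decrease} estimate and a \emph{relative-error} (subgradient) estimate along the iterates, and then feed them, together with the \L{}ojasiewicz inequality \eqref{Lojasiewicz} at the cluster point, into an abstract convergence-with-rate argument. For the sufficient decrease I would start from the optimality condition of the subproblem \eqref{S_P}, namely $g_2^k\in\partial f_1(x^{k+1})$, together with $g_2^k\in\partial f_2(x^k)$. If, say, $f_1$ is $\mu$-strongly convex, then the objective minimized in \eqref{S_P} is $\mu$-strongly convex with minimizer $x^{k+1}$; comparing its values at $x^{k+1}$ and at $x^k$ and using the subgradient inequality for $f_2$ at $x^k$ yields
\[
f(x^k)-f(x^{k+1})\ \ge\ \tfrac{\mu}{2}\,\|x^{k+1}-x^k\|^2,
\]
with a symmetric statement if instead $f_2$ is strongly convex. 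Since $f\ge f^\star$ and $\{f(x^k)\}$ is nonincreasing, it converges, and summing this bound forces $\|x^{k+1}-x^k\|\to 0$.

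Next I would build the relative-error bound from the local Lipschitz-gradient hypothesis. Using $g_2^{k-1}\in\partial f_1(x^k)$ (optimality at the previous step) and $g_2^k\in\partial f_2(x^k)$, the vector $g_2^{k-1}-g_2^k$ lies in $\partial f(x^k)$ by the sum rule for DC functions, which applies because one of $f_1,f_2$ is differentiable near the critical point. When $f_1$ has $L$-Lipschitz gradient there, $\partial f_1$ is single-valued, so $g_2^{k-1}=\nabla f_1(x^k)$ and $g_2^k=\nabla f_1(x^{k+1})$, whence $\|g_2^{k-1}-g_2^k\|\le L\,\|x^{k+1}-x^k\|$; the case of $f_2$ smooth is analogous, up to a shift of index. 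Hence $\mathrm{dist}\big(0,\partial f(x^k)\big)\le L\,\|x^{k+1}-x^k\|$ for all large $k$. Together with $\|x^{k+1}-x^k\|\to 0$, closedness of the graph of the limiting subdifferential, and continuity of $f$ on its closed domain, this gives $0\in\partial f(x^\infty)$ — so $x^\infty$ is stationary — and also $f(x^k)\downarrow f(x^\infty)=:f_\infty$.

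Set $r_k:=f(x^k)-f_\infty\ge 0$. A capture argument (choosing a starting index $m$ with $x^m$ close to $x^\infty$ and $r_m$ small, then showing inductively via the finite-length estimate below that the iterates never leave the $\epsilon$-ball around $x^\infty$) lets us apply \eqref{Lojasiewicz} at $x^\infty$ along the tail of the sequence. Combining it with the two estimates gives the scalar recursion $r_k-r_{k+1}\ge \kappa\,r_k^{2\theta}$ with $\kappa=\mu/(2C^2L^2)$. For $\theta\in(0,1/2]$ we have $2\theta\le 1$, so eventually $r_k^{2\theta}\ge r_k$ and hence $r_{k+1}\le(1-\kappa)r_k$, giving $r_k=O(\rho^k)$ for some $\rho\in(0,1)$; for $\theta\in(1/2,1)$ we have $2\theta>1$, and the classical lemma for such recursions gives $r_k=O\big(k^{-1/(2\theta-1)}\big)$. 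To convert these value rates into rates for $\|x^k-x^\infty\|$ I would use the desingularising/telescoping device: with $\varphi(s)=\tfrac{C}{1-\theta}\,s^{1-\theta}$, concavity of $\varphi$ and the \L{}ojasiewicz and relative-error bounds give $\varphi(r_k)-\varphi(r_{k+1})\ge\varphi'(r_k)(r_k-r_{k+1})\ge (r_k-r_{k+1})/\big(L\,\|x^{k+1}-x^k\|\big)$, and then sufficient decrease, a standard arithmetic--geometric inequality, and summation from $k=m$ onward yield
\[
\sum_{k\ge m}\|x^{k+1}-x^k\|\ \le\ \|x^m-x^{m-1}\|+\tfrac{2L}{\mu}\,\varphi(r_m).
\]
Thus $\{x^k\}$ has finite length, hence converges — necessarily to $x^\infty$ — and bounding the right-hand side using $\|x^m-x^{m-1}\|\le\sqrt{2r_{m-1}/\mu}$ together with the value rates above gives $\|x^k-x^\infty\|\le c\,q^k$ when $\theta\in(0,1/2]$ and $\|x^k-x^\infty\|\le c\,k^{(1-\theta)/(1-2\theta)}$ when $\theta\in(1/2,1)$.

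The technical heart — and where I expect to spend the most care — is this last passage: the desingularising function must be tuned to the \L{}ojasiewicz exponent, the index bookkeeping (estimates at $x^k$ versus $x^{k+1}$, forward versus backward displacements) must be arranged so that the telescoping actually closes, and one must check that in the polynomial regime the contribution $\|x^m-x^{m-1}\|=O(r_{m-1}^{1/2})$ decays no slower than $\varphi(r_m)=O(r_m^{1-\theta})$, which is exactly where $\theta>1/2$ (equivalently $1-\theta<1/2$) is used; this is also intertwined with the capture argument that keeps the iterates inside the \L{}ojasiewicz neighbourhood. A secondary subtlety is the nonsmooth calculus — justifying $g_2^{k-1}-g_2^k\in\partial f(x^k)$ by a DC sum rule and the closedness of $\partial f$ — which is precisely what the ``differentiable with locally Lipschitz derivative at every critical point'' hypothesis is there to supply.
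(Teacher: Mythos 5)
The paper states this theorem as an imported result of Le Thi et al.\ \cite{le2018convergence} and gives no proof of it, so there is no in-paper argument to compare against; I can only judge your reconstruction on its own terms. Your proposal follows the standard Kurdyka--\L{}ojasiewicz descent template that the cited source itself uses: sufficient decrease $f(x^k)-f(x^{k+1})\ge\tfrac{\mu}{2}\|x^{k+1}-x^k\|^2$ from strong convexity of the subproblem \eqref{S_P} (correct, and it works whichever of $f_1,f_2$ carries the modulus), a relative-error bound from the locally Lipschitz gradient, a capture argument, and the desingularisation/telescoping step. The quantitative bookkeeping is right: the recursion $r_k-r_{k+1}\ge\kappa r_k^{2\theta}$ yields linear decay for $\theta\le 1/2$ and $r_k=O(k^{-1/(2\theta-1)})$ for $\theta>1/2$, and your explicit check that the term $\|x^m-x^{m-1}\|=O(r_{m-1}^{1/2})$ is dominated by $\varphi(r_m)=O(r_m^{1-\theta})$ precisely when $\theta>1/2$ is exactly where the exponent $(1-\theta)/(1-2\theta)$ comes from.

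The one step I would not let pass without justification is the claimed inclusion $g_2^{k-1}-g_2^k\in\partial_l f(x^k)$ in the case where $f_1$ (rather than $f_2$) is the smooth function. When $f_2$ is strictly differentiable one indeed has $\partial_l f(x)=\partial f_1(x)-\nabla f_2(x)$, and the inclusion is immediate. But when $f_1$ is smooth and $f_2$ is not, $\partial_l f(x)=\nabla f_1(x)+\partial_l(-f_2)(x)$, and $\partial_l(-f_2)(x)$ is in general a \emph{proper} subset of $-\partial f_2(x)$ (it consists only of limits of $-\nabla f_2$ at nearby points of differentiability), so the particular subgradient $g_2^k$ selected by the algorithm need not give an element of $\partial_l f(x^k)$. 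To close this you must either pose the \L{}ojasiewicz inequality for a subdifferential large enough to contain $\partial f_1(x)-\partial f_2(x)$ --- e.g.\ the convex hull $\co(\partial_l f(x))$, which is what the paper itself does in \eqref{PL_i} --- or note that under the theorem's hypotheses the relevant nonsmooth calculus is supplied by the cited source's choice of subdifferential. You flag this as a ``secondary subtlety,'' but it is the one point on which the argument genuinely depends on how \eqref{Lojasiewicz} is extended to the nonsmooth setting; everything else in your sketch is sound.
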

In particular, item 2 shows a linear convergence rate when $\theta\in(0,1/2]$.
Yen et al.\ \cite{Yen2012} had already shown linear convergence earlier for a much smaller class of DC functions.
We will present a complementary result to this theorem (see Theorem \ref{theorem_PL} below), for the case $\theta = 1/2$, where we
show linear convergence of the objective function values, and  give explicit expressions for the constants that determine the linear convergence rate.
Moreover, we will relax  the assumption of a bounded sequence of iterates, and the assumption of strong convexity.

In the absence of conditions like the \L{}ojasiewicz  gradient inequality \eqref{Lojasiewicz}, only weaker convergence rates are known for the DCA.
In particular, Tao and An \cite[Proposition 2]{tao1997convex} and Le Thi et al.\ \cite[Corollary 1]{LETHI2021162} have shown an $O\left(\frac{1}{\sqrt{N}} \right)$ convergence rate after $N$ iterations under suitable assumptions,
 as given in the next theorem.
\begin{theorem}[Corollary 1 in \cite{LETHI2021162}, Proposition 2 in \cite{tao1997convex}]\label{thm:le2021convergence}
If $x^\infty$ is a limit point of the iteration sequence generated by the DCA, and at least one of $f_1$ and $f_2$ is strongly convex, i.e.\,
for some $\mu_1, \mu_2 \ge 0$ such that $\mu_1 + \mu_2 > 0$,
\[
x \mapsto f_i(x) - \frac{\mu_i}{2}\|x\|^2 \mbox{ is convex for } i \in \{1,2\},
\]
then the series $\|x^{k+1}-x^k\|$ converges, and, after $N
+1$ iterations,
\[
\sum_{k=1}^{N} \|x^{k+1}-x^k\|^2\leq \tfrac{2(f(x^{1})-f(x^{N+1}))}{\mu_1+\mu_2},
\]
and, consequently,
\[
\min_{1 \le k \le N} \|x^{k+1}-x^k\| \le \sqrt{\frac{2(f(x^1) - f^\star)}{(\mu_1+\mu_2)N} } = O\left(\frac{1}{\sqrt{N}} \right).
\]
\end{theorem}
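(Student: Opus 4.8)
The plan is to combine the first-order optimality condition of the convex subproblem \eqref{S_P} with the strong-convexity inequalities for $f_1$ and $f_2$, and then telescope. First I would record the optimality condition: since the last two terms in \eqref{S_P} do not depend on $x$, the point $x^{k+1}$ minimizes the convex function $x\mapsto f_1(x)-\langle g_2^k,x\rangle$, so $0\in\partial f_1(x^{k+1})-g_2^k$, that is, $g_2^k\in\partial f_1(x^{k+1})$, while $g_2^k\in\partial f_2(x^k)$ by construction.

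Next I would write down the two subgradient inequalities coming from strong convexity. Since $f_1$ is $\mu_1$-strongly convex and $g_2^k\in\partial f_1(x^{k+1})$,
\[
f_1(x^k)-f_1(x^{k+1})\ \ge\ \langle g_2^k,x^k-x^{k+1}\rangle+\tfrac{\mu_1}{2}\|x^{k+1}-x^k\|^2,
\]
and since $f_2$ is $\mu_2$-strongly convex and $g_2^k\in\partial f_2(x^k)$,
\[
f_2(x^{k+1})-f_2(x^k)\ \ge\ \langle g_2^k,x^{k+1}-x^k\rangle+\tfrac{\mu_2}{2}\|x^{k+1}-x^k\|^2.
\]
Adding these and using $f(x^k)-f(x^{k+1})=\bigl(f_1(x^k)-f_1(x^{k+1})\bigr)+\bigl(f_2(x^{k+1})-f_2(x^k)\bigr)$, the two inner-product terms cancel and I obtain the key one-step estimate
\[
f(x^k)-f(x^{k+1})\ \ge\ \tfrac{\mu_1+\mu_2}{2}\,\|x^{k+1}-x^k\|^2 .
\]

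Finally I would sum this over $k=1,\dots,N$. The left-hand side telescopes, giving
\[
\sum_{k=1}^{N}\|x^{k+1}-x^k\|^2\ \le\ \tfrac{2\bigl(f(x^1)-f(x^{N+1})\bigr)}{\mu_1+\mu_2},
\]
and replacing $f(x^{N+1})$ by the lower bound $f^\star$ bounds the right-hand side by $\tfrac{2(f(x^1)-f^\star)}{\mu_1+\mu_2}$, uniformly in $N$ (here $\mu_1+\mu_2>0$ is used). Since the summands are nonnegative with partial sums bounded above, the series $\sum_{k\ge1}\|x^{k+1}-x^k\|^2$ converges, and in particular $\|x^{k+1}-x^k\|\to0$. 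The last claim follows by bounding the minimum of $N$ nonnegative numbers by their average, $\min_{1\le k\le N}\|x^{k+1}-x^k\|^2\le\tfrac1N\sum_{k=1}^{N}\|x^{k+1}-x^k\|^2$, and taking square roots.

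I do not expect a genuine obstacle here: the argument is elementary once the optimality condition $g_2^k\in\partial f_1(x^{k+1})$ is in hand. The only points needing a word of care are that this inclusion requires the argmin in \eqref{S_P} to be attained (implicit in the statement of the DCA) and that $f(x^1)$ be finite (covered by the standing assumptions). The feature that makes the proof work without any Lipschitz-gradient hypothesis is precisely the cancellation of the linear terms above; this is the same structure that the sharper semidefinite-programming performance-estimation analysis later in the paper exploits more fully.
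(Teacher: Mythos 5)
Your proof is correct. Note that the paper itself gives no proof of this theorem---it is quoted verbatim from the cited references---and your argument (the optimality condition $g_2^k\in\partial f_1(x^{k+1})$, the two strong-convexity subgradient inequalities, cancellation of the linear terms, telescoping, and bounding the minimum by the average) is exactly the classical argument behind Proposition 2 of Tao--An and Corollary 1 of Le Thi et al. The only caveat is that what you establish is convergence of the series $\sum_k\|x^{k+1}-x^k\|^2$ (hence $\|x^{k+1}-x^k\|\to 0$), not of $\sum_k\|x^{k+1}-x^k\|$; this is what the cited results actually prove, and the statement's loosely worded phrase ``the series $\|x^{k+1}-x^k\|$ converges'' should be read in that sense.
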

We will derive some variants on this $O\left(\frac{1}{\sqrt{N}} \right)$ convergence result in Corollary \ref{remark 1} and
 in Section \ref{sec:iterates convergence},
 where we improve the constants in the $O\left(\frac{1}{\sqrt{N}} \right)$ bounds. We also show that we obtain the best possible constants,
 by demonstrating an example where our bound in Corollary \ref{remark 1} is tight.

\subsection*{Outline and further contributions of this paper}


  The novel aspect of the analysis in this paper is that we will apply performance estimation to derive convergence rates.
   Drori and Teboulle, in the seminal paper \cite{drori2014performance},
   introduced performance estimation as a strong tool  for the worst-case analysis of first-order methods. The underlying idea of performance estimation
    is that the worst-case complexity may be cast as an optimization problem. Furthermore, this optimization problem can often be
     reformulated as a semidefinite programming problem. It is worth noting that performance estimation has been employed extensively for
      the analysis of worst-case convergence rates of first-order methods; see, e.g.\
      \cite{abbaszadehpeivasti2021exact,de2020worst,de2017worst,drori2014performance,taylor2017smooth,Taylor}, and the references therein.


\par
This paper is organized  as follows. In Section \ref{Sec.Def} we review some definitions and notions from convex  analysis,
 which will be used in the following sections.
 We study the DCA for sufficiently smooth DC decompositions in Section \ref{Sec.UDCA}. By using performance estimation, we give a convergence
  rate of $O(1/\sqrt{N})$ in Corollary \ref{remark 1}, without any strong convexity assumption,
   thus extending and complementing Le Thi et al.\ \cite[Corollary 1]{LETHI2021162}. We construct an example that shows this $O(1/\sqrt{N})$
   bound is tight.
   Since the first termination criterion is not suitable for the analysis of nonsmooth DC compositions, we investigate the
   DCA with another stopping criterion in Section \ref{sec:nonsmooth criterion},
   and we show a convergence rate of $O(1/N)$. This result is completely new to the best of our knowledge.
 In Section \ref{L_C_PL} we study the DCA when the objective function satisfies the Polyak-{\L}ojasiewicz inequality,
 and we derive a linear convergence rate in Theorem \ref{theorem_PL}, thereby refining some linear convergence results in
 Le Thi et al.\ \cite{le2018convergence} as described above.

\section{Basic Definitions and Preliminaries}\label{Sec.Def}
In this section, we recall some notions and definitions from  convex analysis.
 Throughout the paper, $\|\cdot\|$  and $\langle\cdot,\cdot\rangle$ denote the Euclidean norm and the dot product, respectively.
   $I_{\mathbb{R}_+}$ stands for the indicator function on $\mathbb{R}_+\cup\{\infty\}$, i.e.,
\[
I_{\mathbb{R}_+}(x)=\begin{cases}
1 \ \ \ x\geq 0 \cup\{\infty\}\\
0 \ \ \ x<0\cup\{-\infty\}.
\end{cases}
\]

 Let $f:\mathbb{R}^n\to[-\infty, \infty]$ be an extended convex function.
 The domain of $f$ is denoted and defined as $\text{dom}(f):=\{x: f(x)<\infty\}$.
 The function $f$ is called proper if it does not attain the
value $-\infty$ and its domain is non-empty. We call $f$ closed if its epi-graph is closed,
 that is $\{(x, r): f(x)\leq r\}$ is a closed subset of $\mathbb{R}^{n+1}$.
  We denote the convex hull of $X\subseteq\mathbb{R}^n$ by $\co(X)$.
   We adopt the conventions that, for $a, b, c, d\in\mathbb{R}$ with $c\neq d$ and
   $a\neq 0$, $\frac{b}{\infty}=0, 0\times \infty=0$ and $\frac{a\infty+b}{c\infty-d\infty}=\frac{a}{c-d}$.
    For the function $f:\mathbb{R}^n\to [-\infty, \infty]$,
 the conjugate function $f^*:\mathbb{R}^n\to\mathbb{R}$ is defined as $f^*(g)=\max_{x\in\mathbb{R}^n} \langle g, x\rangle-f(x)$.
  Moreover, we denote the
 set of subgradients of $f$ at $x\in \text{dom}(f)$ by $\partial f(x)$,
$$
\partial f(x)=\{g: f(y)\geq f(x)+\langle g, y-x\rangle, \forall y\in\mathbb{R}^n\}.
$$

Let $L\in(0, \infty]$ and $\mu\in (0, \infty)$. We call an extended convex function $f:\mathbb{R}^n\to[-\infty, \infty]$ $L$-smooth if for any $x_1, x_2\in\mathbb{R}^n$,
$$
\|g_1-g_2\|\leq L\|x_1-x_2\| \ \ \forall  g_1\in\partial f(x_1),\  g_2\in\partial f(x_2).
$$
Note that if $L<\infty$, then $f$ must be differentiable on $\mathbb{R}^n$. In addition, any extended convex function is $\infty$-smooth.
 Also recall that the function $f:\mathbb{R}^n\to[-\infty, \infty]$ is called $\mu$-strongly convex function if
 the function $x \mapsto f(x)-\tfrac{\mu}{2}\| x\|^2$ is convex.
 Clearly, any convex function is $0$-strongly convex.
  We denote the set of closed proper convex functions which are $L$-smooth and $\mu$-strongly convex by $\mathcal{F}_{\mu,L}(\mathbb{R}^n)$.

%

Let $\mathcal{I}$ be a finite index set and let $\{x^i; g^i; f^i\}_{i\in \mathcal{I}}\subseteq \mathbb{R}^n\times \mathbb{R}^n\times \mathbb{R}$.
A set $\{x^i; g^i; f^i\}_{i\in \mathcal{I}}$ is called $\mathcal{F}_{\mu,L}$-interpolable if there exists $f\in\mathcal{F}_{\mu,L}(\mathbb{R}^n)$
 with
$$
f(x^i)=f^i, \ g^i\in\partial f(x^i) \ \ i\in\mathcal{I}.
$$
The next theorem gives necessary and sufficient conditions for $\mathcal{F}_{\mu,L}$-interpolablity.
\begin{theorem}\cite[Theorem 4]{taylor2017smooth}\label{T1}
Let $L\in (0, \infty]$ and $\mu\in [0, \infty)$ and let $\mathcal{I}$ be a finite index set. The set $\{(x^i; g^i; f^i)\}_{i\in \mathcal{I}}\subseteq \mathbb{R}^n\times \mathbb{R}^n \times \mathbb{R}$ is $\mathcal{F}_{\mu,L}$-interpolable if and only if for any $i, j\in\mathcal{I}$, we have
{\small{
\begin{align*}
\tfrac{1}{2(1-\tfrac{\mu}{L})}\left(\tfrac{1}{L}\left\|g^i-g^j\right\|^2+\mu\left\|x^i-x^j\right\|^2-\tfrac{2\mu}{L}\left\langle g^j-g^i,x^j-x^i\right\rangle\right)\leq f^i-f^j-\left\langle g^j, x^i-x^j\right\rangle.
\end{align*}
}}
\end{theorem}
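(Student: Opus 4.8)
The plan is to prove both implications by reducing everything to a single base case — interpolation by a convex function with no smoothness requirement, where the explicit piecewise-affine interpolant $x \mapsto \max_{i\in\mathcal{I}}\{f^i + \langle g^i, x - x^i\rangle\}$ is easily checked to work, its correctness following at once from the inequalities $f^i \ge f^j + \langle g^j, x^i - x^j\rangle$ (which is the stated inequality with $\mu = 0$, $L = \infty$). The two tools that carry out the reduction are: subtracting the quadratic $\tfrac{\mu}{2}\|\cdot\|^2$, which turns an $\mathcal{F}_{\mu,L}$ function into a convex $(L-\mu)$-smooth one; and Legendre--Fenchel conjugation, which exchanges convex $L'$-smoothness with $\tfrac{1}{L'}$-strong convexity. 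Throughout one may assume $\mu < L$, the degenerate case $\mu = L < \infty$ (pure quadratics) being either handled separately or absorbed into the conventions fixed in Section \ref{Sec.Def}.

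For necessity, suppose $f \in \mathcal{F}_{\mu,L}(\mathbb{R}^n)$ interpolates the data. Put $\phi := f - \tfrac{\mu}{2}\|\cdot\|^2$, a convex $(L-\mu)$-smooth function with $\nabla\phi(x^k) = g^k - \mu x^k$. Invoking the standard cocoercivity-type bound for convex $L'$-smooth functions, $\phi(y) \ge \phi(x) + \langle\nabla\phi(x), y - x\rangle + \tfrac{1}{2L'}\|\nabla\phi(x) - \nabla\phi(y)\|^2$ with $L' = L - \mu$, $x = x^j$, $y = x^i$, and substituting $\phi(x^k) = f^k - \tfrac{\mu}{2}\|x^k\|^2$, the square expands (using $\tfrac{L}{2(L-\mu)} = \tfrac{1}{2(1-\mu/L)}$ and $\langle g^j - g^i, x^j - x^i\rangle = \langle g^i - g^j, x^i - x^j\rangle$) to precisely the claimed inequality; letting $L \to \infty$ recovers the strong-convexity inequality $f^i - f^j - \langle g^j, x^i - x^j\rangle \ge \tfrac{\mu}{2}\|x^i - x^j\|^2$.

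For sufficiency, assume the inequalities hold for all ordered pairs. Consider first $\mu = 0$, $L < \infty$. Pass to the conjugate data $(a^i; b^i; c^i) := (g^i;\, x^i;\, \langle g^i, x^i\rangle - f^i)$; rearranging the hypothesis shows it is equivalent to $c^i \ge c^j + \langle b^j, a^i - a^j\rangle + \tfrac{1}{2L}\|a^i - a^j\|^2$ for all $i,j$. The function
\[
\psi(a) := \tfrac{1}{2L}\|a\|^2 + \max_{i\in\mathcal{I}}\Bigl\{\, c^i - \tfrac{1}{2L}\|a^i\|^2 + \bigl\langle b^i - \tfrac{1}{L}a^i,\ a - a^i\bigr\rangle \,\Bigr\}
\]
is then $\tfrac{1}{L}$-strongly convex and finite on $\mathbb{R}^n$, and the equivalent inequalities guarantee that the maximum at $a = a^i$ is attained by the $i$-th term, so $\psi(a^i) = c^i$ and $b^i \in \partial\psi(a^i)$. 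Hence $f := \psi^*$ is convex and $L$-smooth, and Fenchel's equality together with $b^i \in \partial\psi(a^i)$ yields $f(x^i) = f^i$ and $g^i = \nabla f(x^i)$. For general $0 < \mu < L$, use the quadratic shift: a direct algebraic check shows the shifted data $(x^i;\, g^i - \mu x^i;\, f^i - \tfrac{\mu}{2}\|x^i\|^2)$ satisfy the convex $(L-\mu)$-smooth interpolation inequalities (the same manipulation as in the necessity step, read backwards), so the previous case produces a convex $(L-\mu)$-smooth $h$ interpolating them, and $f := h + \tfrac{\mu}{2}\|\cdot\|^2 \in \mathcal{F}_{\mu,L}(\mathbb{R}^n)$ interpolates the original data. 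The remaining cases $L = \infty$ (with $\mu = 0$ or $\mu > 0$) reduce by the same quadratic shift to the piecewise-affine base case above.

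The conceptual content of the argument is entirely in the base case and the two reductions; the work, and the only real obstacle, is the bookkeeping: verifying that conjugation sends the single inequality in the statement with parameters $(\mu, L)$ to the same inequality with parameters $(\tfrac{1}{L}, \tfrac{1}{\mu})$, that the quadratic shift sends $(\mu, L)$ to $(0, L - \mu)$, and that the prefactor $\tfrac{1}{2(1-\mu/L)}$ and the limiting cases $L = \infty$, $\mu = L$ behave as the conventions of Section \ref{Sec.Def} prescribe. None of these steps is deep, but they must be carried out with care.
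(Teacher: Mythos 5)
The paper does not prove this statement at all: Theorem \ref{T1} is imported verbatim from Taylor, Hendrickx and Glineur \cite{taylor2017smooth} (their Theorem 4), so there is no in-paper proof to compare against. Your proposal is correct and in fact reconstructs essentially the argument of that cited source: necessity via the quadratic shift $f\mapsto f-\tfrac{\mu}{2}\|\cdot\|^2$ combined with the cocoercivity inequality for convex $(L-\mu)$-smooth functions, and sufficiency via Legendre--Fenchel conjugation of the data to the strongly convex side, where the explicit interpolant $\psi$ (a quadratic plus a piecewise-affine max) does the work; the algebra you outline (the prefactor $\tfrac{1}{2(1-\mu/L)}$ arising from $\tfrac{\mu}{2}+\tfrac{\mu^2}{2(L-\mu)}=\tfrac{\mu L}{2(L-\mu)}$, and the conjugated inequality for the pair $(i,j)$ matching the original for $(j,i)$) checks out. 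The only caveats worth recording are the ones you already flag: the formula implicitly requires $\mu<L$, and the limiting cases $L=\infty$ must be read through the conventions of Section \ref{Sec.Def}.
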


In the next lemma, we extend the descent lemma for DCA when $L_1$ or $L_2$ is finite.
\begin{lemma}\label{Lem_de}
 Let $f_1\in\mathcal{F}_{\mu_1, L_1}({\mathbb{R}^n})$ and $f_2\in\mathcal{F}_{\mu_2, L_2}({\mathbb{R}^n})$ and let $f=f_1-f_2$. If $g_1\in\partial f_1(x)$ and $g_2\in\partial f_2(x)$, then
 $$
 f^\star\leq f(x)-\tfrac{1}{2\left(L_1-\mu_2\right)}\|g_1-g_2\|^2.
 $$
\end{lemma}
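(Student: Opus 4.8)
The plan is to prove the two estimates
$\inf f \le f(x) - \tfrac{1}{2(L_1-\mu_2)}\|g_1-g_2\|^2$ and $\inf f \le f(x) - \tfrac{1}{2L_2}\|g_1-g_2\|^2$
separately (each being vacuous when the relevant constant is $+\infty$). Since $\tfrac{1}{2S}=\max\{\tfrac{1}{2(L_1-\mu_2)},\tfrac{1}{2L_2}\}$ and $f^\star\le\inf f$, the two together give the lemma. Each estimate just amounts to exhibiting one point $y$ whose value $f(y)$ lies below the stated right-hand side.

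For the first estimate (nontrivial only when $L_1<\infty$, in which case $g_1=\nabla f_1(x)$), I would rewrite $f=\hat f_1-\hat f_2$ with $\hat f_1:=f_1-\tfrac{\mu_2}{2}\|\cdot\|^2$ and $\hat f_2:=f_2-\tfrac{\mu_2}{2}\|\cdot\|^2$. Strong convexity of $f_2$ makes $\hat f_2$ convex, so $g_2-\mu_2 x\in\partial\hat f_2(x)$ yields $\hat f_2(y)\ge \hat f_2(x)+\langle g_2-\mu_2 x,\,y-x\rangle$; while $L_1$-smoothness of the convex function $f_1$ says $\tfrac{L_1}{2}\|\cdot\|^2-f_1=\tfrac{L_1-\mu_2}{2}\|\cdot\|^2-\hat f_1$ is convex, and its subgradient inequality at $x$ rearranges to $\hat f_1(y)\le \hat f_1(x)+\langle g_1-\mu_2 x,\,y-x\rangle+\tfrac{L_1-\mu_2}{2}\|y-x\|^2$. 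Subtracting (the $\mu_2 x$ terms cancel) gives the global quadratic majorant $f(y)\le f(x)+\langle g_1-g_2,\,y-x\rangle+\tfrac{L_1-\mu_2}{2}\|y-x\|^2$, and minimizing its right-hand side over $y$ (at $y=x-\tfrac{1}{L_1-\mu_2}(g_1-g_2)$) produces exactly $f(y)\le f(x)-\tfrac{1}{2(L_1-\mu_2)}\|g_1-g_2\|^2$.

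For the second estimate (nontrivial only when $L_2<\infty$, so $f_2$ is differentiable and $g_2=\nabla f_2(x)$), I would take $y$ to be a solution of the convex subproblem $\min_z f_1(z)-\langle g_2,z\rangle$, i.e. one DCA step from $x$ driven by $g_2$, so that $g_2\in\partial f_1(y)$ and hence $f_1(y)\le f_1(x)+\langle g_2,\,y-x\rangle$ by convexity of $f_1$. Applying the interpolation inequality of Theorem~\ref{T1} with $\mu=0$ to the $L_2$-smooth convex function $f_2$ at $(x,g_2)$ and $(y,\nabla f_2(y))$ gives $f_2(y)\ge f_2(x)+\langle g_2,\,y-x\rangle+\tfrac{1}{2L_2}\|\nabla f_2(y)-g_2\|^2$; adding the two, the linear terms cancel and $f(y)\le f(x)-\tfrac{1}{2L_2}\|\nabla f_2(y)-g_2\|^2$. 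The step I expect to be the main obstacle is the remaining one — showing that the gradient increment produced by the subproblem dominates the residual, $\|\nabla f_2(y)-g_2\|\ge\|g_1-g_2\|$ — which is where the standing hypothesis $L_2\le L_1-\mu_2$ has to enter: one must combine monotonicity of $\partial f_1$ (which constrains the direction of $y-x$ relative to $g_1-g_2$) with the $\mu_2$-strong convexity and $L_2$-smoothness of $f_2$ tying $y-x$ back to $\nabla f_2(y)-g_2$. As an alternative route for this half, one can pass to the Fenchel dual $\inf f=\inf(f_2^*-f_1^*)$ and exploit that $f_2^*$ is $\tfrac{1}{L_2}$-strongly convex, but the analogous ``matching'' step — translating the dual quantities back into $\|g_1-g_2\|$ and $f(x)$ — remains the crux.
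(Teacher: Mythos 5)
Your first estimate, $f^\star\le f(x)-\tfrac{1}{2(L_1-\mu_2)}\|g_1-g_2\|^2$ for $L_1<\infty$, is proved correctly and is in substance the paper's own argument for the case $S=L_1-\mu_2$: the paper adds the descent inequality $f_1(y)\le f_1(x)+\langle g_1,y-x\rangle+\tfrac{L_1}{2}\|y-x\|^2$ to the strong-convexity lower bound for $f_2$ and minimizes the resulting quadratic majorant of $f$ over $y$; your shift by $\tfrac{\mu_2}{2}\|\cdot\|^2$ produces exactly the same majorant, so this half is fine.

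The second estimate is where the real problem sits, and the step you flag as the ``main obstacle'' --- $\|\nabla f_2(y)-g_2\|\ge\|g_1-g_2\|$ for the DCA step $y$ --- is not merely hard: it is false, and so is the estimate $f^\star\le f(x)-\tfrac{1}{2L_2}\|g_1-g_2\|^2$ itself. Take $n=1$, $f_1(t)=|t|\in\mathcal{F}_{0,\infty}(\mathbb{R})$, and let $f_2=h\in\mathcal{F}_{0,L_2}(\mathbb{R})$ be the Huber function $h(t)=\tfrac{L_2}{2}t^2$ for $|t|\le 1/L_2$ and $h(t)=|t|-\tfrac{1}{2L_2}$ otherwise. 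Then $f=f_1-f_2\ge 0=f(0)$, so $f^\star=0$ is a valid (indeed exact) lower bound, all hypotheses of the lemma hold, and $S=\min(\infty-0,L_2)=L_2$. At $x=0$ take $g_1=1\in\partial f_1(0)=[-1,1]$ and $g_2=\nabla f_2(0)=0$: the claimed bound reads $0\le-\tfrac{1}{2L_2}$. In your construction $y=\argmin_z\, (f_1(z)-\langle g_2,z\rangle)=0=x$, so $\nabla f_2(y)-g_2=0$ while $\|g_1-g_2\|=1$, confirming that the missing inequality cannot be supplied. You should not expect to repair this by a cleverer choice of $y$ or by Toland duality: the paper's own proof of the case $S=L_2$ begins from ``$f_1(y)\le f_1(x)+\langle g_1,y-x\rangle$ for all $y$'', which reverses the subgradient inequality and holds only for affine $f_1$, so that case of the lemma is simply not established (and, by the example above, not true). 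The only part of the statement that survives is the one you proved, namely the bound with $S=L_1-\mu_2$, which requires $L_1<\infty$.
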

{\it Proof }
If $L_1=\infty$, the proof is immediate. Let $L_1<\infty$. By $L$-smoothness and strong convexity, we have
\begin{align*}
f_1(y)\leq f_1(x)+\langle g_1, y-x\rangle+\tfrac{L_1}{2}\|y-x\|^2,\\
f_2(y)\geq f_2(x)+\langle g_2, y-x\rangle+\tfrac{\mu_2}{2}\|y-x\|^2,
\end{align*}
for $y\in\mathbb{R}^n$. By the above inequalities, we get
\begin{align*}
f(y)\leq f(x)+\langle g_1-g_2, y-x\rangle+\tfrac{L_1-\mu_2}{2}\|y-x\|^2.
\end{align*}
Hence, by taking minimum on both sides of the last inequality with respect to $y$ for fixed $x$, we get
$$
f^\star\leq f(x)-\tfrac{1}{2(L_1-\mu_2)}\|g_1-g_2\|^2.
$$
\qed

Since the DC optimization problem \eqref{P} may have a non-convex and non-smooth objective function $f$,
 we will also need a more general notion of subgradients
than in the convex case.
\begin{definition}
 Let $f:\mathbb{R}^n\to\mathbb{R}$ be  lower semi-continuous and let $f(\bar x)$ be finite.
\begin{itemize}
    \item
    The vector $g$ is called regular subgradient of $f$ at $\bar x$, written $g\in \hat\partial_l f(\bar x)$, if for all $x$ in some neighborhood of $\bar x$
    $$
    f(x)\geq f(\bar x)+\langle g, x-\bar x\rangle+o(\|x-\bar x\|).
    $$
      \item
    The vector $g$ is called general subgradient of $f$ at $\bar x$, written
    $g\in \partial_l f(\bar x)$, if there exist sequences $\{x^i\}$ and $\{g^i\}$ with $g^i\in \hat\partial_l f(x^i)$ such that
    $$
    x^i\to \bar x, \ f(x^i)\to f(\bar x), \ g^i\to g.
    $$
\end{itemize}
\end{definition}
It is worth mentioning that $\hat\partial_l f(\bar x)$ is a closed convex set. In addition, $\partial_l f(\bar x)$ is also closed but not necessarily convex. Note that when $f$ is closed proper convex, then $\partial f(x)=\hat\partial_l f(x)=\partial_l f(x)$ for $x\in\text{dom} (f)$. We refer the interested reader to \cite{rockafellar2009variational}
 for more discussions on regular and general subdifferentials.
\begin{definition} \label{Def2}
Let  $f_1, f_2$ be closed proper convex  functions, and let $f$ be lower semi-continuous.
\begin{itemize}
    \item
The point $\bar x\in \text{dom}(f)$  is called a critical point of problem \eqref{P} if
  \begin{align}\label{S.P}
\partial f_1(\bar x)\cap \partial f_2(\bar x) \neq \emptyset.
  \end{align}
  \item
  The point $\bar x\in \text{dom}(f)$  is called a stationary point of problem \eqref{P} if
   \begin{align}\label{S.P2}
  0\in \partial_l f(\bar x).
  \end{align}
  \end{itemize}
  \end{definition}
Obviously, the stationarity condition is stronger than criticality.  We recall that a convex function will be locally Lipschitz
 around $\bar x$ providing it takes finite values in a neighborhood of $\bar x$; see Theorem 35.1 in \cite{Roc}.
  Consequently,  if $f_1$ or $f_2$ takes finite values around a neighborhood of  a stationary point $\bar x$,
  then $\bar x$ is a  critical point; see Corollary 10.9 in \cite{rockafellar2009variational}. However, its converse does not hold in general.
   For instance, consider $f:\mathbb{R}\to\mathbb{R}$ given as $f(x)=x$.
 The function $f$ may be written as $f=f_1-f_2$ where $f_1(x)=\max(x, 0)$ and $f_2(x)=\max(-x, 0)$. Suppose that $\bar x=0$.
 It is readily seen that $\partial f_1(\bar x)\cap \partial f_2(\bar x) \neq \emptyset$, but $\bar x = 0$ is not a stationary point of $f$.
 It is worth noting that, if  $f_2$ is strictly differentiable at $\bar x$,
 these definitions are equivalent; see Example 10.10 in \cite{rockafellar2009variational}.
  Recall that function $f$ is strictly differentiable at $\bar x$, if
  \[
  \lim_{(x, x^\prime)\to (\bar x, \bar x) \atop x\neq x^\prime} \frac{f(x)-f(x^\prime)-\langle \nabla f(\bar x), x-x^\prime\rangle}{\|x-x^\prime\|}=0.
  \]
  We refer the interested reader to \cite{tao2005dc,joki2018double,pang2017computing} and references therein for more discussions on optimality conditions for DC problems.


\subsection{The DC problem}
In this section, we consider
    \begin{align}    \label{P1}
        \min \  &f(x)=f_1(x)-f_2(x)\\
        \nonumber \text{s.t.}\ &x\in\mathbb{R}^n,
    \end{align}
    where $f_1\in\mathcal{F}_{\mu_1, L_1}({\mathbb{R}^n})$ and $f_2\in\mathcal{F}_{\mu_2, L_2}({\mathbb{R}^n})$. Here, we assume that $L_1, L_2\in (0, \infty]$ and $\mu_1, \mu_2\in [0, \infty)$, and consequently $f$ may be non-differentiable.
     We may assume without loss of generality that  $f_1$ and $f_2$ satisfy the following assumptions:
    \begin{align}
    L_1>\mu_2, \ \ \ \ L_2>\mu_1.
    \end{align}
    Indeed, if $L_1\leq \mu_2$, then for $x,y\in\mathbb{R}^n$ and $\lambda\in[0, 1]$, we have
    \begin{align*}
    & \lambda f_1(x)+(1-\lambda)f_1(y)\leq f_1(\lambda x+(1-\lambda)y)+\lambda(1-\lambda)\tfrac{L_1}{2}\|x-y\|^2\\
     &-\lambda f_2(x)-(1-\lambda)f_2(y)\leq -f_2(\lambda x+(1-\lambda)y)-\lambda(1-\lambda)\tfrac{\mu_2}{2}\|x-y\|^2;
    \end{align*}
    see Theorem 2.15 and Theorem 2.19 in \cite{nesterov2018lectures}. By summing the above inequalities, we obtain
    \begin{align*}
    & \lambda f(x)+(1-\lambda)f(y)\leq f(\lambda x+(1-\lambda)y)+\lambda(1-\lambda)\tfrac{L_1-\mu_2}{2}\|x-y\|^2,
    \end{align*}
    which implies concavity of $f$ on $\mathbb{R}^n$. In this case, problem \eqref{P1} will be unbounded from below. This follows from the fact that  a concave function on $\mathbb{R}^n$ is unbounded from below unless it is constant. Likewise, one can show that problem \eqref{P1} will be convex providing $L_2\leq\mu_1$.

The Toland dual \cite{toland1979duality} of problem \eqref{P1} may  be written as
\begin{align}    \label{D1}
        \min \  &f_2^*(x)-f_1^*(x)\\
        \nonumber \text{s.t.}\ &x\in\mathbb{R}^n.
    \end{align}
 It is known that problems \eqref{P1} and \eqref{D1} share the same optimal value \cite{toland1979duality}.

 In what follows, we investigate the convergence rate of Algorithm \ref{Alg}  with the termination criterion $\|g_1^k-g_2^k\|\leq \epsilon$.
  As a motivation of this criterion, recall that $\|g_1^k-g_2^k\| = 0$ implies that $x^k$ is a critical point of \eqref{P} in the non-smooth case,
   and a stationary point of $f$  if  $f_2$ is differentiable; see our discussion following Definition \ref{Def2}.
  In Section \ref{Sec.UDCA} we will derive results for the case that at least one of $f_1$ or $f_2$ is differentiable,
  and we will consider the more general situation in Section \ref{sec:nonsmooth criterion}.

  For well-definedness of the DCA (Algorithm \ref{Alg}), throughout the paper, we assume that
 $$
 x^k\in \text{dom}(\partial f_1)\cap  \text{dom}(\partial f_2) \ \ k=1, 2, \ldots,
 $$
 where $\text{dom}(\partial f_1)=\{x: \partial f_1(x)\neq \emptyset\}$.
 It is worth noting that similar algorithm has been developed for the dual problem in \cite{le2018dc}, and \eqref{S_P} is equivalent to $x^{k+1}\in\partial f^*_1(g_2^k)$.

\section{Performance analysis of the  DCA for smooth $f_1$ or $f_2$}
\label{Sec.UDCA}
In this subsection, we apply performance estimation for the analysis of Algorithm \ref{Alg} for the case that at least one of $f_1$ or $f_2$ is
$L$-smooth for some finite $L>0$.
  The worst-case convergence rate of Algorithm \ref{Alg} can be obtained by solving the following abstract optimization problem:
\begin{align}\label{...P1}
\nonumber   \max & \ \left(\min_{1\leq k\leq N+1} \left\|g_1^k-g_2^k\right\|^2\right)\\
& \nonumber \  g_1^{N+1}, g_2^{N+1},  x^{N+1}, \ldots, x^2 \ \textrm{are generated  by Algorithm \ref{Alg} w.r.t.}\ f_1, f_2, x^1  \\
& f(x)\geq f^\star  \ \ \ \forall x\in\mathbb{R}^n\\
\nonumber  & \ f_1\in \mathcal{F}_{\mu_1,L_1}(\mathbb{R}^n), f_2\in \mathcal{F}_{\mu_2,L_2}(\mathbb{R}^n)\\
\nonumber  &  f_1(x^1)-f_2(x^1)-f^\star\leq \Delta\\
\nonumber  & \ x^1\in\mathbb{R}^n,
\end{align}
where $\Delta\geq0$ denote the difference between the optimal value and the value of $f$ at the starting point.
Here, $f_1, f_2$ and $x^k$, $g_1^k$ and $g_2^k$ ($k\in\{1, ..., N+1\}$) are decision variables, and $\Delta,\mu_1,L_1,\mu_2,L_2$ and $N$ are fixed parameters.

Problem \eqref{...P1} is an intractable infinite-dimensional optimization problem with an
infinite number of constraints. In what follows, we provide a semidefinite programming relaxation of the problem.

By Theorem \ref{T1}, problem \eqref{...P1} can be written as,
\begin{align}\label{P2}
\nonumber   \max & \ \left(\min_{1\leq k\leq N+1} \left\|g_1^k-g_2^k\right\|^2\right)\\
\nonumber \st \ & \tfrac{1}{2(1-\tfrac{\mu_1}{L_1})}\left(\tfrac{1}{L_1}\left\|g_1^i-g_1^j\right\|^2+\mu_1\left\|x^i-x^j\right\|^2-\tfrac{2\mu_1}{L_1}\left\langle g_1^j-g_1^i,x^j-x^i\right\rangle\right)\leq\\
 \nonumber & \ \ \ \ \ f_1^i-f_1^j-\left\langle g_1^j, x^i-x^j\right\rangle \ \ i, j\in\left\{1, \ldots, N+1\right\}  \\
& \nonumber\tfrac{1}{2(1-\tfrac{\mu_2}{L_2})}\left(\tfrac{1}{L_2}\left\|g_2^i-g_2^j\right\|^2+\mu_2\left\|x^i-x^j\right\|^2-\tfrac{2\mu_2}{L_2}\left\langle g_2^j-g_2^i,x^j-x^i\right\rangle\right)\leq\\
& \ \ \ \ \  f_2^i-f_2^j-\left\langle g_2^j, x^i-x^j\right\rangle \ \ i, j\in\left\{1, \ldots, N+1\right\}    \\
\nonumber &  \  g_1^{k+1}=g_2^k  \ \ k\in\left\{1, \ldots, N\right\} \\
\nonumber& \ f_1^k-f_2^k-\frac{1}{2(L_1-\mu_2)}\|g_1^k-g_2^{k}\|^2\geq f^\star  \ \ k\in\left\{1, \ldots, N+1\right\}\\
\nonumber  &  f_1^1-f_2^1-f^\star \leq \Delta.
\end{align}
In problem \eqref{P2}, $f^\star$ and $x^k,\ g_1^k, \ g_2^k, \ f_1^k, \ f_2^k$, $k\in \left\{1, \ldots, N+1\right\}$, are decision variables. By virtue of Lemma \ref{Lem_de}, constraints $f(x)\geq f^\star$ for each $x\in\mathbb{R}^n$ is replaced by $ f_1^k-f_2^k-\frac{1}{2(L_1-\mu_2)}\|g_1^k-g_2^{k}\|^2\geq f^\star,  \ \ k\in\left\{1, \ldots, N+1\right\}$. Due to the necessary and sufficient optimality conditions for convex problems, $x^{k+1}\in \text{argmin}_{x\in\mathbb{R}^n} f_1(x)-f_2(x^k)-\langle g_2^k, x-x^k\rangle$, $k\in\left\{1, \ldots, N\right\}$ implies $g_1^{k+1}=g_2^k$ for some $g_1^{k+1}\in\partial{f}(x^{k+1})$; see Theorem 3.63 in \cite{Amir}.  By substituting  $g_2^{k}=g_1^{k+1}$, $k\in\{1,\ldots, N\}$,
 the above formulation may be written as:
{\small{
\begin{align}\label{P3}
\nonumber   \max & \ \ell\\
\nonumber \st \ & \left\|g_1^i-g_1^{i+1}\right\|^2\geq\ell\ \ \ i\in \{1,\dots, N\}\\
&\nonumber \left\|g_1^{N+1}-g_2^{N+1}\right\|^2\geq\ell\\
&\nonumber \tfrac{1}{2(1-\tfrac{\mu_1}{L_1})}\left(\tfrac{1}{L_1}\left\|g_1^i-g_1^j\right\|^2+\mu_1\left\|x^i-x^j\right\|^2-\tfrac{2\mu_1}{L_1}\left\langle g_1^j-g_1^i,x^j-x^i\right\rangle\right)\leq\\
\nonumber & \ \ \ \ \ f_1^i-f_1^j-\left\langle g_1^j, x^i-x^j\right\rangle \ \ i, j\in\left\{1, \ldots, N+1\right\}  \\
& \nonumber \tfrac{1}{2(1-\tfrac{\mu_2}{L_2})}\left(\tfrac{1}{L_2}\left\|g_1^{i+1}-g_1^{j+1}\right\|^2+\mu_2\left\|x^i-x^j\right\|^2-\tfrac{2\mu_2}{L_2}\left\langle g_1^{j+1}-g_1^{i+1},x^j-x^i\right\rangle\right)\leq\\
 &   \ \ \ \ \ f_2^i-f_2^j-\left\langle g_1^{j+1}, x^i-x^j\right\rangle \ \ i, j\in\left\{1, \ldots, N\right\}    \\  
\nonumber&\tfrac{1}{2(1-\tfrac{\mu_2}{L_2})}\left(\tfrac{1}{L_2}\left\|g_2^{N+1}-g_1^{j+1}\right\|^2+\mu_2\left\|x^{N+1}-x^j\right\|^2-\tfrac{2\mu_2}{L_2}\left\langle g_1^{j+1}-g_2^{N+1},x^j-x^{N+1}\right\rangle\right)\\
\nonumber & \ \ \ \ \ \leq f_2^{N+1}-f_2^j-\left\langle g_1^{j+1}, x^{N+1}-x^j\right\rangle \ \  j\in\left\{1, \ldots, N\right\}\\
& \nonumber\tfrac{1}{2(1-\tfrac{\mu_2}{L_2})}\left(\tfrac{1}{L_2}\left\|g_1^{i+1}-g_2^{N+1}\right\|^2+\mu_2\left\|x^i-x^{N+1}\right\|^2-\tfrac{2\mu_2}{L_2}\left\langle g_2^{N+1}-g_1^{i+1},x^{N+1}-x^i\right\rangle\right)\\
\nonumber & \ \ \ \ \ \leq f_2^i-f_2^{N+1}-\left\langle g_2^{N+1}, x^i-x^{N+1}\right\rangle \ \ i\in\left\{1, \ldots, N\right\}   \\
\nonumber& \ f_1^k-f_2^k-\frac{1}{2(L_1-\mu_2)}\|g_1^k-g_1^{k+1}\|^2\geq f^\star  \ \ k\in\left\{1, \ldots, N\right\}\\
\nonumber& \ f_1^{N+1}-f_2^{N+1}-\frac{1}{2(L_1-\mu_2)}\|g_1^{N+1}-g_2^{N+1}\|^2\geq f^\star\\
\nonumber  &  f_1^1-f_2^1-f^\star \leq \Delta.
\end{align}
}}
By using this formulation, the next result (Theorem \ref{theorem 1}) provides a convergence rate for  Algorithm \ref{Alg}.
Since the proof is quite technical, a few remarks are in order.
The proof uses the performance estimation technique of Drori and Teboulle \cite{drori2014performance}, that consists of the following steps:
\begin{enumerate}
\item
Observe that problem \eqref{P3} may be rewritten as a semidefinite programming (SDP) problem (for sufficiently large $N$) by replacing all inner products
by the entries of an unknown Gram matrix.
\item
Use weak duality of SDP to bound the optimal value of \eqref{P3} by constructing a dual feasible solution.
\item
The dual feasible solution is constructed empirically, by first doing numerical experiments with fixed values of the parameters $\Delta, N, \mu_1, L_1, \mu_2, L_2$, and
noting the dual multipliers.
\item
Subsequently, the analytical expressions of the dual multipliers are guessed, based on the numerical values, and the guess is verified analytically.
\item
In the proof of Theorem \ref{theorem 1}, the conjectured dual multipliers are simply stated, and then shown to provide the required bound
on the optimal value of \eqref{P3} through the corresponding
aggregation of the constraints of \eqref{P3}.
\end{enumerate}

 \begin{theorem}\label{theorem 1}
Let $f_1\in\mathcal{F}_{\mu_1, L_1}({\mathbb{R}^n})$ and $f_2\in\mathcal{F}_{\mu_2, L_2}({\mathbb{R}^n})$ and let $f(x^1)-f^\star= \Delta$. Suppose that
    $L_1$ or $L_2$ is finite. Then after $N$ iterations of Algorithm \ref{Alg}, one has:
\begin{align}\label{B1}
 &\min_{1\leq k\leq N+1}\left\|g_1^k-g_2^k\right\|\leq\sqrt{\frac{\mathcal{A}\Delta}{\mathcal{B} N+\mathcal{C}}},
\end{align}
where,
\begin{align*}
&\mathcal{A}=2\left(L_1L_2-\mu_1L_2I_{\mathbb{R}_+}(L_1-L_2)-\mu_2L_1I_{\mathbb{R}_+}({L_2}-{L_1})\right),\\
& \mathcal{B}=L_1+L_2+\mu_1\left(\tfrac{L_1}{L_2}-3\right)I_{\mathbb{R}_+}\left({L_1}-{L_2}\right)+
\mu_2\left(\tfrac{L_2}{L_1}-3\right)I_{\mathbb{R}_+}\left({L_2}-{L_1}\right),
\end{align*}
and
\[
\mathcal{C}=\frac{L_1L_2-\mu_1L_2I_{\mathbb{R}_+}\left({L_1}-{L_2}\right)-\mu_2L_1I_{\mathbb{R}_+}\left({L_2}-{L_1}\right)}{L_1-\mu_2}.
\]
\end{theorem}
{\it Proof }{
 We investigate two cases $L_1\geq L_2$ and $L_1<L_2$. Suppose that $U$ denote the square of the right-side of inequality \eqref{B1} and let $B=\tfrac{U}{\Delta}$. To prove this bound, we show that $U$ is an upper bound for problem \eqref{P3}.
  First, we  consider $L_1\geq L_2$. Let
\begin{align*}
 &\bar\lambda=\frac{2\left(L_1L_2-\mu_1(2L_2-L_1)\right)}{N\left(L_1+L_2+\mu_1\left(\tfrac{L_1}{L_2}-3\right)\right)+\tfrac{L_2(L_1-\mu_1)}{L_1-\mu_2}}\\
 &\bar\eta_1=\frac{L_2-\mu_1}{\left(L_1+L_2+\mu_1(\tfrac{L_1}{L_2}-3)\right)N+\tfrac{L_2(L_1-\mu_1)}{L_1-\mu_2}}\\
 &\bar\eta_k=\frac{\tfrac{L_1\mu_1}{L_2}+(L_1+L_2-3\mu_1)}{\left(L_1+L_2+\mu_1(\tfrac{L_1}{L_2}-3)\right)N+\tfrac{L_2(L_1-\mu_1)}{L_1-\mu_2}}, \ \ k\in\{2,\ldots,N\}\\
 &\bar\eta_{N+1}=1-\bar\eta_1-\sum_{k=2}^{N}\bar\eta_k=\frac{\tfrac{L_1\mu_1}{L_2}+L_1-2\mu_1+\tfrac{L_2(L_1-\mu_1)}{L_1-\mu_2}}{\left(L_1+L_2+\mu_1(\tfrac{L_1}{L_2}-3)\right)N+\tfrac{L_2(L_1-\mu_1)}{L_1-\mu_2}}.
\end{align*}
 By direct calculation, one can verify that
 {\scriptsize{
\begin{align*}
&\ell-U+\bar\eta_1\left( \left\| g_1^1-g_1^2\right\|^2-\ell\right)+\sum_{k=2}^{N}\bar\eta_k \left( \left\| g_1^k-g_1^{k+1}\right\|^2-\ell\right)+\bar\eta_{N+1}\left( \left\| g_1^{N+1}-g_2^{N+1}\right\|^2-\ell\right)\\
& +B\left( f^\star-f_1^1+f^1_2+\Delta\right)+B\left(f_1^{N+1}- f_2^{N+1}-\frac{1}{2(L_1-\mu_2)}\|g_1^{N+1}-g_2^{N+1}\|^2-f^\star\right)+\\
&B\sum_{k=1}^{N} \Bigg( f_1^k-f_1^{k+1}-\left\langle g_1^{k+1}, x^k-x^{k+1}\right\rangle-\tfrac{1}{2(1-\tfrac{\mu_1}{L_1})}\Bigg(\tfrac{1}{L_1}\left\|g_1^k-g_1^{k+1}\right\|^2+\mu_1\left\|x^k-x^{k+1}\right\|^2-\\
&\tfrac{2\mu_1}{L_1}\left\langle g_1^{k+1}-g_1^k,x^{k+1}-x^k\right\rangle\Bigg)\Bigg)+\bar\lambda\sum_{k=1}^{N-1} \Bigg( f_2^{k+1}-f_2^{k}-\left\langle g_1^{k+1}, x^{k+1}-x^{k}\right\rangle-\\
&\tfrac{1}{2(1-\tfrac{\mu_2}{L_2})}\left(\tfrac{1}{L_2}\left\|g_1^{k+1}-g_1^{k+2}\right\|^2+\mu_2\left\|x^k-x^{k+1}\right\|^2-\tfrac{2\mu_2}{L_2}\langle g_1^{k+2}-g_1^{k+1},x^{k+1}-x^k\rangle\right)\Bigg)\\
&+(\bar\lambda-B)\sum_{k=1}^{N-1} \Bigg( f_2^k-f_2^{k+1}-\left\langle g_1^{k+2}, x^k-x^{k+1}\right\rangle-\tfrac{1}{2(1-\tfrac{\mu_2}{L_2})}(\tfrac{1}{L_2}\left\|g_1^{k+1}-g_1^{k+2}\right\|^2+\\
&\mu_2\left\|x^k-x^{k+1}\right\|^2-\tfrac{2\mu_2}{L_2}\langle g_1^{k+2}-g_1^{k+1},x^{k+1}-x^k\rangle\Bigg)+(\bar\lambda-B)\Bigg( f_2^N-f_2^{N+1}-\left\langle g_2^{N+1}, x^N-x^{N+1}\right\rangle\\
&-\tfrac{1}{2(1-\tfrac{\mu_2}{L_2})}\left(\tfrac{1}{L_2}\left\|g_1^{N+1}-g_2^{N+1}\right\|^2+\mu_2\left\|x^N-x^{N+1}\right\|^2-\tfrac{2\mu_2}{L_2}\langle g_2^{N+1}-g_1^{N+1},x^{N+1}-x^N\rangle\right)\Bigg)\\
& +\bar\lambda\Bigg( f_2^{N+1}-f_2^{N}-\left\langle g_1^{N+1}, x^{N+1}-x^{N}\right\rangle-\tfrac{1}{2(1-\tfrac{\mu_2}{L_2})}\Bigg(\tfrac{1}{L_2}\left\|g_1^{N+1}-g_2^{N+1}\right\|^2+\mu_2\left\|x^N-x^{N+1}\right\|^2\\
&-\tfrac{2\mu_2}{L_2}\langle g_2^{N+1}-g_1^{N+1},x^{N+1}-x^N\rangle\Bigg)\Bigg)
=\\
&-\bar\beta_{1}^{-1}\sum_{i=1}^{N}\left\|\bar\beta_{1}g^{i}_1-\bar\beta_{1}g_1^{i+1}-\bar\alpha_{1}x^{i}+\bar\alpha_{1}x^{i+1}\right\|^2-\bar\alpha_{2}^{-1}\sum_{i=1}^{N-1}\left\|\bar\alpha_{2}x^{i}-\bar\alpha_{2}x^{i+1}-\bar\beta_{2}g^{i+1}_1+\bar\beta_{2}g^{i+2}_1\right\|^2\\
&-\bar\alpha_{2}^{-1}\left\|\bar\alpha_{2}x^{N}-\bar\alpha_{2}x^{N+1}-\bar\beta_{2}g^{N+1}_1+\bar\beta_{2}g^{N+1}_2\right\|^2
\leq0,
\end{align*}
}}
where
\begin{align*}
  &\bar\alpha_1=\frac{\mu_1 B}{2(L_1-\mu_1)}, \ \  \ \ \bar\beta_1=\frac{\mu_1B}{2L_2(L_1-\mu_1)}, \\
   &   \bar\alpha_2=\frac{(-\mu_1L_2^2-2\mu_1\mu_2L_2+\mu_1L_1L_2+\mu_1\mu_2L_1+\mu_2L_1L_2)B }{2(L_1-\mu_1)(L_2-\mu_2)},\\
    &  \bar\beta_2=\frac{(L_1L_2\mu_2-2\mu_1\mu_2L_2+\mu_1\mu_2L_1-\mu_1L_2^2+\mu_1L_1L_2)B}{2L_2(L_1-\mu_1)(L_2-\mu_2)}.
\end{align*}
It is readily seen that $\bar\lambda, \bar\eta_k\ (k\in\{1, \ldots, N+1\}), \bar\lambda-B, \bar\beta_1, \bar\alpha_2\geq 0$.
Thus we have $\ell\leq U$ for any feasible point of problem \eqref{P3}.
Now, we consider $L_1<L_2$. In this case, because bound \eqref{B1} does not depend on $\mu_1$, we may assume  $\mu_1=0$ in problem \eqref{P3}.
Let
\begin{align*}
 &\hat\lambda=\frac{2\left(L_1L_2-\mu_2(2L_1-L_2)\right)}{\left(L_1+L_2+\mu_2\left(\tfrac{L_2}{L_1}-3\right)\right)N+\tfrac{L_1(L_2-\mu_2)}{L_1-\mu_2}}\\
 &\hat\eta_1=\frac{\tfrac{L_2(L_1+\mu_2)}{L_1}-2\mu_2}{\left(L_1+L_2+\mu_2(\tfrac{L_2}{L_1}-3)\right)N+\tfrac{L_1(L_2-\mu_2)}{L_1-\mu_2}}\\
 &\hat\eta_k=\frac{\tfrac{L_2(L_1+\mu_2)}{L_1}+(L_1-3\mu_2)}{\left(L_1+L_2+\mu_2(\tfrac{L_2}{L_1}-3)\right)N+\tfrac{L_1(L_2-\mu_2)}{L_1-\mu_2}}, \ \ k\in\{2,\ldots,N\}\\
 &\hat\eta_{N+1}=1-\hat\eta_1-\sum_{k=2}^{N}\hat\eta_k=\frac{\tfrac{L_1(L_2-\mu_2)}{L_1-\mu_2}+L_1-\mu_2}{\left(L_1+L_2+\mu_2(\tfrac{L_2}{L_1}-3)\right)N+\tfrac{L_1(L_2-\mu_2)}{L_1-\mu_2}}.
\end{align*}
 With some calculation, one can establish that
 {\scriptsize{
\begin{align*}
&\ell-U+\hat\eta_1\left( \left\| g_1^1-g_1^2\right\|^2-\ell\right)+\sum_{k=2}^{N} \hat\eta_k\left( \left\| g_1^k-g_1^{k+1}\right\|^2-\ell\right)+\hat\eta_{N+1}\left( \left\| g_1^{N+1}-g_2^{N+1}\right\|^2-\ell\right) \\
&+ B\left( f^\star-f_1^1+f^1_2+\Delta\right)+B\left(f_1^{N+1}- f_2^{N+1}-\frac{1}{2(L_1-\mu_2)}\|g_1^{N+1}-g_2^{N+1}\|^2-f^\star\right)\\
& +(\hat\lambda-B)\sum_{k=1}^{N} \left( f_1^{k+1}-f_1^{k}-\left\langle g_1^{k}, x^{k+1}-x^{k}\right\rangle-\tfrac{1}{2L_1}\left\|g_1^{k+1}-g_1^{k}\right\|^2\right)\\
&+\hat\lambda\sum_{k=1}^{N} \left( f_1^k-f_1^{k+1}-\left\langle g_1^{k+1}, x^k-x^{k+1}\right\rangle-\tfrac{1}{2L_1}\left\|g_1^{k}-g_1^{k+1}\right\|^2\right)\\
&+B\sum_{k=1}^{N-1} \Bigg( f_2^{k+1}-f_2^{k}-\left\langle g_1^{k+1}, x^{k+1}-x^{k}\right\rangle-\\
&\tfrac{1}{2(1-\tfrac{\mu_2}{L_2})}\left(\tfrac{1}{L_2}\left\|g_1^{k+1}-g_1^{k+2}\right\|^2+\mu_2\left\|x^k-x^{k+1}\right\|^2-\tfrac{2\mu_2}{L_2}\left\langle g_1^{k+2}-g_1^{k+1},x^{k+1}-x^k\right\rangle\right)\Bigg)\\
&+B\Bigg( f_2^{N+1}-f_2^{N}-\left\langle g_1^{N+1}, x^{N+1}-x^{N}\right\rangle-\\
&\tfrac{1}{2(1-\tfrac{\mu_2}{L_2})}\left(\tfrac{1}{L_2}\left\|g_1^{N+1}-g_2^{N+1}\right\|^2+\mu_2\left\|x^N-x^{N+1}\right\|^2-\tfrac{2\mu_2}{L_2}\left\langle g_2^{N+1}-g_1^{N+1},x^{N+1}-x^N\right\rangle\right)\Bigg)\\
&= -\hat\beta_{1}^{-1}\sum_{i=1}^{N}\left\|\hat\beta_{1}g^{i}_1-\hat\beta_{1}g_1^{i+1}-\hat\alpha_{1}x^{i}_1+\hat\alpha_{1}x^{i+1}\right\|^2
-\hat\alpha_{2}^{-1}\sum_{i=1}^{N-1}\left\|\hat\alpha_{2}x^{i}-\hat\alpha_{2}x^{i+1}-\hat\beta_{2}g^{i+1}_1+\hat\beta_{2}g^{i+2}_1\right\|^2\\
&-\hat\alpha_{2}^{-1}\left\|\hat\alpha_{2}x^{N}-\hat\alpha_{2}x^{N+1}-\hat\beta_{2}g^{N+1}_1+\hat\beta_{2}g^{N+1}_2\right\|^2
\leq0,
\end{align*}
}}
where
\begin{equation*}
\hat\alpha_1=\tfrac{\mu_2B(1-\tfrac{L_1}{L_2})}{2L_1(1-\tfrac{\mu_2}{L_2})}, \ \
 \hat\alpha_2=\tfrac{\mu_2L_1B}{2(L_2-\mu_2)}, \ \
    \hat\beta_1=\tfrac{\mu_2B(1-\tfrac{L_1}{L_2})}{2L_1^2(1-\tfrac{\mu_2}{L_2})}, \ \
    \hat\beta_2=\tfrac{\mu_2B}{2(L_2-\mu_2)}.
\end{equation*}
It is readily seen that $\hat\lambda, \hat\eta_k\ (k\in\{1, \ldots, N+1\}), \hat\lambda-B, \hat\beta_1, \hat\alpha_2\geq 0$. The rest of proof is similar to that of the former case, and the proof is complete.
\qed
The theorem implies that Algorithm \ref{Alg} is convergent when at least one of the Lipschitz constants is finite. In the  following corollary, we simplify the inequality \eqref{B1} for some special cases of $L_1$, $L_2$, $\mu_1$, and $\mu_2$.

\begin{corollary}\label{remark 1}
Suppose that $f_1\in\mathcal{F}_{\mu_1, L_1}({\mathbb{R}^n})$ and $f_2\in\mathcal{F}_{\mu_2, L_2}({\mathbb{R}^n})$.
Then, after $N$ iterations of Algorithm \ref{Alg}, one has:
\begin{enumerate}[i)]
    \item
    If $L_1=\infty$, $L_2<\infty$, then
    \[
    \min_{1\leq k\leq N+1}\left\|g_1^k-g_2^k\right\|\leq \sqrt{\frac{2L_2^2\left( f(x^1)-f^\star \right)}{N(L_2+\mu_1)}}.
\]
\item
    If $L_2=\infty$, $L_1<\infty$, then
    \begin{equation}
\label{bound Cor 31ii}
    \min_{1\leq k\leq N+1}\left\|g_1^k-g_2^k\right\|\leq \sqrt{\frac{2L_1^2\left(L_1-\mu_2\right)\left( f(x^1)-f^\star \right)}{\left(L_1^2-\mu_2^2\right)N+L_1^2}}.
\end{equation}
\item
    If $L_1, L_2<\infty$, and $\mu_1=\mu_2=0$ then
    \[
    \min_{1\leq k\leq N+1}\left\|g_1^k-g_2^k\right\|\leq \sqrt{\frac{2L_1L_2\left(f(x^1)-f^\star\right)}{\left(L_1+L_2\right)N+L_2}}.
\]
\end{enumerate}
\end{corollary}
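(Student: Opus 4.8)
The plan is to obtain Corollary~\ref{remark 1} as a straightforward specialization of Theorem~\ref{theorem 1}, using the arithmetic conventions for $\infty$ and the indicator function $I_{\mathbb{R}_+}$ fixed in Section~\ref{Sec.Def}. In each of the three cases I would first decide which branch of Theorem~\ref{theorem 1} is in force according to the sign of $L_1-\mu_2-L_2$, then substitute the prescribed parameter values, simplify the indicator terms, and finally pass to the limit in the (at most one) infinite Lipschitz constant by writing numerator and denominator of the fraction under the square root as affine functions of that constant and invoking the convention $\frac{a\infty+b}{c\infty-d\infty}=\frac{a}{c-d}$. Throughout, $\Delta$ is replaced by $f(x^1)-f^\star$.

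For part~i), since $L_1=\infty$ and $L_2<\infty$ we have $L_1-\mu_2=\infty>L_2$, so branch~ii) and bound~\eqref{B_Al1} apply. There the numerator under the root is $2L_2(L_1-\mu_1)\Delta$ and the denominator is $L_1\bigl(N+\tfrac{\mu_1 N}{L_2}+1\bigr)+(L_2-3\mu_1)N-\mu_1$, so the convention leaves $\frac{2L_2\Delta}{N+\mu_1 N/L_2+1}$; multiplying numerator and denominator by $L_2$ gives the asserted $\frac{2L_2^2\Delta}{N(L_2+\mu_1)+L_2}$. For part~ii), $L_2=\infty$ and $L_1<\infty$ give $L_1-\mu_2\le L_1<L_2$, so branch~i) applies; here $I_{\mathbb{R}_+}(L_1-L_2)=0$ and $I_{\mathbb{R}_+}(L_2-L_1)=1$, whence $\mathcal{A}=2L_1(L_2-\mu_2)$, $\mathcal{B}=L_1+L_2+\mu_2(\tfrac{L_2}{L_1}-3)$ and $\mathcal{C}=\tfrac{L_1(L_2-\mu_2)}{L_1-\mu_2}$. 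Reading off the coefficient of $L_2$ in $\mathcal{A}$ (namely $2L_1$) and in $\mathcal{B}N+\mathcal{C}$ (namely $N(1+\mu_2/L_1)+L_1/(L_1-\mu_2)$), the convention yields $\frac{2L_1\Delta}{N(1+\mu_2/L_1)+L_1/(L_1-\mu_2)}$, and clearing denominators by $L_1(L_1-\mu_2)$ produces exactly \eqref{bound Cor 31ii}.

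For part~iii), $L_1,L_2<\infty$ and $\mu_1=\mu_2=0$, so $L_1-\mu_2=L_1$ and I would split on the sign of $L_1-L_2$. If $L_1\le L_2$, branch~i) gives $\mathcal{A}=2L_1L_2$, $\mathcal{B}=L_1+L_2$, $\mathcal{C}=L_2=\max(L_1,L_2)$; if $L_1>L_2$, branch~ii) with $\mu_1=0$ gives the same fraction but with $L_1=\max(L_1,L_2)$ as the constant term in the denominator. The two subcases coincide when $L_1=L_2$ and together give $\sqrt{2L_1L_2(f(x^1)-f^\star)/((L_1+L_2)N+\max(L_1,L_2))}$. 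I do not expect any substantive obstacle here beyond Theorem~\ref{theorem 1}: the only care required is to apply the conventions $b/\infty=0$, $0\times\infty=0$ and $\frac{a\infty+b}{c\infty-d\infty}=\frac{a}{c-d}$ so that the limits in parts~i) and~ii) are legitimate, and to verify that each indicator term in $\mathcal{A},\mathcal{B},\mathcal{C}$ activates or vanishes as stated.
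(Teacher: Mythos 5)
Your proposal is correct and matches the paper's (implicit) derivation: the paper states Corollary~\ref{remark 1} as a direct specialization of Theorem~\ref{theorem 1}, which is exactly what you carry out, including the correct branch selection (branch~ii for part~i since $L_1-\mu_2=\infty>L_2$, branch~i for part~ii, and the split on $L_1\lessgtr L_2$ for part~iii) and the limit conventions for the infinite Lipschitz constants. All three substitutions check out algebraically.
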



One can compare the results in Corollary \ref{remark 1} to that of Le Thi et al. \cite{LETHI2021162} as reviewed
 earlier in Theorem \ref{thm:le2021convergence}.
 First of all, Corollary \ref{remark 1} part $iii)$ does not assume strict convexity of $f_1$ or $f_2$, and in this sense it is more general than the
 result in Theorem \ref{thm:le2021convergence}. If we do assume $\mu_1+\mu_2 > 0$, then, for example, if $L_1 < \infty$,
  Theorem \ref{thm:le2021convergence} implies,
 \[
  \min_{1\leq k\leq N+1}\left\|g_1^k-g_2^k\right\|\leq L_1\sqrt{\frac{2\left( f(x^1)-f^\star \right)}{{\left(\mu_1+\mu_2\right)N}}},
 \]
which is weaker than our bound \eqref{bound Cor 31ii} since $\mu_1 \le L_1$, although the $O(1/\sqrt{N})$ dependence on $N$ is the same.
We will do a further, more direct, comparison of Theorem \ref{thm:le2021convergence} and Corollary \ref{remark 1} in Section \ref{sec:iterates convergence},
where we consider the convergence rate of the sequence $\|x^{k+1} - x^k\|$.

\subsection{An example to prove tightness}
  In what follows, we give a class of functions for which  the bound in Corollary \ref{remark 1}, part $ii)$, is attained,
  implying that the $O(1/\sqrt{N})$ convergence rate is tight.
  This result is new to the best of our knowledge.
\begin{example}
Let $L_1\in(0, \infty)$. Suppose that $N$ is selected such that $U:=\sqrt{\tfrac{2}{L_1(N+1)}}< 1$. Let $f_1: \mathbb{R}\to\mathbb{R}$ be given as follows,
\begin{align*}
f_1(x) = \begin{cases}
\tfrac{L_1}{2}\left(x-i(1-U)\right)^2+\tfrac{L_1Ui(i-1)(1-U)}{2}  &  \ \ x\in\left[\alpha_i, \beta_{i+1}\right)\\
L_1U\beta_i(x-\beta_i)+\tfrac{\beta_iL_1U^2}{2}+\tfrac{\beta_i(\beta_i-1)L_1U}{2} &  \ \ x\in \left[\beta_{i}, \alpha_i\right)\\
\tfrac{L_1}{2}x^2 &  \ \ x\in\left(-\infty, 0\right),
\end{cases}
\end{align*}
where, for $i\in\{1, \ldots, N+1\}$,
    $\alpha_i=i-U$, $\beta_i=i-1$,
and $\beta_{N+2}=\infty$.
Note that $f_1\in\mathcal{F}_{0, L_1}({\mathbb{R}})$. Suppose that $f_2: \mathbb{R}\to\mathbb{R}$ is given by
\begin{align*}
    f_2(x)=\max_{1\leq i\leq N+1}\left\{L_1U(i-1)(x-i)+\tfrac{i(i-1)L_1U}{2}\right\}.
\end{align*}
An easy computation shows that
$$
\begin{cases}
    \partial f_2(i)=[L_1U(i-1), L_1Ui]  &  \ \ \ i\in\{1,\dots,N,\}\\
     \partial f_2(N+1)=L_1UN. \\

    \end{cases}
$$
Note that $f_2\in\mathcal{F}_{0, \infty}({\mathbb{R}})$. One can check that, at $x^1=N+1$, one has $f_1(x^1)-f_2(x^1)=1$, $\min_{x\in\mathbb{R}} f_1(x)-f_2(x)=0$ and $\text{argmin}_{x\in\mathbb{R}} f_1(x)-f_2(x)=[0, 1-U]$. By taking $x^1$ as a starting point, Algorithm \ref{Alg}  can generate the following iterates:
$$
x^k=N+2-k, \ \ \ \ k\in\{1, \ldots, N+1\}.
$$
Here at iteration, $k\in\{1, \ldots, N+1\}$, we set $g_2^k=L_1U(N+1-k)$.
It follows that  $|\nabla f_1(x_k)-g_2^k|=\sqrt{\tfrac{2L_1}{N+1}}$, $k\in\{1,\ldots,N+1\}$.
Hence,
$$
\min_{1\leq k\leq N+1}\left\|g_1^k-g_2^k\right\|=\sqrt{\tfrac{2L_1}{N+1}},
$$
 which shows bound \eqref{bound Cor 31ii} in Corollary \ref{remark 1} is exact for this example.
\end{example}

\subsection{Convergence rates for the iterates}
\label{sec:iterates convergence}
In this section we investigate the implications of our results so far on convergence rates
of the iterates $\{x^k\}$.

 \begin{proposition}\label{Por_N1}
Let $f_1\in\mathcal{F}_{\mu_1, L_1}({\mathbb{R}^n})$ and $f_2\in\mathcal{F}_{\mu_2, L_2}({\mathbb{R}^n})$ and let $f(x^1)-f^\star \leq \Delta$. If
    $\mu_1$ or $\mu_2$ is strictly positive, then after $N$ iterations of Algorithm \ref{Alg}, one has:
\begin{align*}
 &\min_{1\leq k\leq N}\left\|x^{k+1}-x^k\right\|\leq\left(\frac{\mathcal{A}}{\mathcal{B} N+\mathcal{C}}\cdot\Delta\right)^{\tfrac{1}{2}},
\end{align*}
where,
{\small
\begin{align*}
&\mathcal{A}=2\left(\mu_2^{-1}\mu_1^{-1}-L_2^{-1}\mu_1^{-1}I_{\mathbb{R}_+}(\mu_2^{-1}-\mu_1^{-1})-L_1^{-1}\mu_2^{-1}I_{\mathbb{R}_+}({\mu_1^{-1}}-{\mu_2^{-1}})\right),\\
&\mathcal{B}=\mu_2^{-1}+\mu_1^{-1}+L_2^{-1}\left(\tfrac{\mu_1}{\mu_2}-3\right)I_{\mathbb{R}_+}\left({\mu_2^{-1}}-{\mu_1^{-1}}\right)+
L_1^{-1}\left(\tfrac{\mu_2}{\mu_1}-3\right)I_{\mathbb{R}_+}\left({\mu_1^{-1}}-{\mu_2^{-1}}\right),\\
&\text{and}\\
&\mathcal{C}=\frac{\mu_2^{-1}\mu_1^{-1}-L_2^{-1}\mu_1^{-1}I_{\mathbb{R}_+}\left({\mu_2^{-1}}-{\mu_1^{-1}}\right)-L_1^{-1}\mu_2^{-1}I_{\mathbb{R}_+}\left({\mu_1^{-1}}-{\mu_2^{-1}}\right)}{\mu_2^{-1}-L_1^{-1}}.
\end{align*}
}
\end{proposition}
{\it Proof }
 The proof is based on the computation of the worst case convergence rate of DCA for problem  \eqref{D1} by applying Theorem \ref{theorem 1}.
 By  Toland duality, $f^\star $ is also a lower bound of problem \eqref{D1}.
 By virtue of conjugate function properties, it follows that $ f_2^*(g_2^1)-f_1^*(g_2^1)-f^\star\leq\Delta$ and $f_2^*\in\mathcal{F}_{L_2^{-1},
  \mu_2^{-1}}({\mathbb{R}^n})$ and $f_1^*\in\mathcal{F}_{L_1^{-1}, \mu_1^{-1}}({\mathbb{R}^n})$.
   In addition, $x^{k+1}\in\partial f_1^*(g_2^k)$ and $x^{k}\in\partial f_2^*(g_2^k)$ for $k\in\{1, \ldots, N\}$.
   Hence, all assumptions of Theorem \ref{theorem 1} hold, and subsequently the bound follows from Theorem \ref{theorem 1}.
\qed

Recall the known result from Theorem \ref{thm:le2021convergence}:
\begin{align}\label{B_X_s}
 \min_{1\leq k\leq N}\left\|x^{k+1}-x^k\right\|\leq
 \left(\frac{2(f(x^1)-f^\star)}{N(\mu_1+\mu_2)}\right)^{\tfrac{1}{2}}.
\end{align}
By employing Theorem \ref{Por_N1}, we get
\begin{align*}
 \min_{1\leq k\leq N}\left\|x^{k+1}-x^k\right\|\leq
 \left(\frac{2(f(x^1)-f^\star)}{N(\mu_1+\mu_2)+\mu_1}\right)^{\tfrac{1}{2}},
\end{align*}
which is tighter than the bound \eqref{B_X_s}.  Moreover, the bound given in Proposition \ref{Por_N1} provides more information
 concerning the worst-case convergence rate of the DCA when $L_1<\infty$ or $L_2<\infty$.

\section{Performance estimation using a convergence criterion for critical points in the nonsmooth case}
\label{sec:nonsmooth criterion}
 Theorem \ref{theorem 1} addresses  the case that $f_1$ or $f_2$ is $L$-smooth with $L<\infty$. In what follows,
 we investigate the case that $f_1$ and $f_2$ are proper convex functions and where both may be non-smooth. For this general case,
  we need to adopt a different termination criterion to obtain results, since the termination criterion $\|g_1^k-g_2^k\|\leq \epsilon$  may be of no use
  in this case. For example, suppose that a DC function $f: \mathbb{R}\to\mathbb{R}\cup \{\infty\}$ is given by
 $$
 f(x)=\begin{cases}
        f_1(x)-f_2(x) & x\geq 0 \\
        \infty & x<0,
      \end{cases}
 $$
 where
 \begin{align*}
  & f_1(x)=\max_{n\in\mathbb{N}\cup\{0\}}\{-n(x-2^{-n})+2-2^{1-n}-n2^{-n} \},\\
     &    f_2(x)=\max_{n\in\mathbb{N}\cup\{0\}} \{-(n+1)(x-2^{-n})+2-3(2^{-n})-n2^{-n} \}.
 \end{align*}
With $x^1=1$ and the given DC decomposition, Algorithm \ref{Alg} may generate
\begin{align*}
  x^k=2^{-k}, \ \ \ \ g_1^k=-(k-1), \ \ \ \  g_2^k=-k,  \ \ \ k\in\{1, 2, ...\}.
\end{align*}
As $|g^k_1-g_2^k|=1$, Algorithm \ref{Alg} never stops by employing the given termination criterion while it is convergent to global minimum $\bar x=0$.
We therefore will use the termination criterion of the following value being sufficiently small:
 \begin{align}\label{termination2}
\nonumber T(x^{k+1})&:=f_1(x^k)-f_2(x^k)-\min_{x\in \mathbb{R}^n} \left(f_1(x)-f_2(x^k)-\left\langle g_2^k, x-x^k\right\rangle\right)\\&=f_1(x^k)-f_1(x^{k+1})-\left\langle g_2^k, x^k-x^{k+1}\right\rangle.
\end{align}
Note that $T(x^{k+1})\geq 0$.
It follows that if $T(x^{k+1})=0$ then $f(x^k)= f(x^{k+1})$, and $x^{k}\in \text{argmin}_{x\in \mathbb{R}^n} f_1(x)-f_2(x^k)-\langle g_2^k,
 x-x^k\rangle$. Indeed, by the optimality conditions for convex problems, we have $\partial f_1(x^k)\cap \partial f_2(x^k)\neq \emptyset$.
  Consequently, $T(x^{k+1})=0$ implies that $x^{k}$ is a critical point of problem \eqref{P1}.
  The aforementioned stopping criterion has also been employed for the analysis of the  Frank-Wolfe method for nonconvex problems;
   see equation (2.6) in \cite{ghadimi2019conditional}.

   In what follows, we investigate Algorithm \ref{Alg} with the termination criterion $T(x^{k+1})<\epsilon$ for the given accuracy $\epsilon>0$.
    The performance estimation problem with termination criterion \eqref{termination2} may be written as follows,
{\scriptsize{
\begin{align}\label{P3.A2}
\nonumber   \max & \ \ell\\
\nonumber \st \ & f_1(x^k)-f_1(x^{k+1})-\left\langle g_1^{k+1}, x^k-x^{k+1}\right\rangle\geq\ell\ \ \ i\in \{1,\dots, N\}\\
&\nonumber \tfrac{1}{2(1-\tfrac{\mu_1}{L_1})}\left(\tfrac{1}{L_1}\left\|g_1^i-g_1^j\right\|^2+\mu_1\left\|x^i-x^j\right\|^2-\tfrac{2\mu_1}{L_1}\left\langle g_1^j-g_1^i,x^j-x^i\right\rangle\right)\\
\nonumber&\leq f_1^i-f_1^j-\left\langle g_1^j, x^i-x^j\right\rangle \ \ i, j\in\left\{1, \ldots, N+1\right\}  \\
\nonumber&\tfrac{1}{2(1-\tfrac{\mu_2}{L_2})}\left(\tfrac{1}{L_2}\left\|g_1^{i+1}-g_1^{j+1}\right\|^2+\mu_2\left\|x^i-x^j\right\|^2-\tfrac{2\mu_2}{L_2}\left\langle g_1^{j+1}-g_1^{i+1},x^j-x^i\right\rangle\right)\\
&\leq \ f_2^i-f_2^j-\left\langle g_1^{j+1}, x^i-x^j\right\rangle \ \ i, j\in\left\{1, \ldots, N\right\}    \\  
\nonumber&\tfrac{1}{2(1-\tfrac{\mu_2}{L_2})}\left(\tfrac{1}{L_2}\left\|g_2^{N+1}-g_1^{j+1}\right\|^2+\mu_2\left\|x^{N+1}-x^j\right\|^2-\tfrac{2\mu_2}{L_2}\left\langle g_1^{j+1}-g_2^{N+1},x^j-x^{N+1}\right\rangle\right)\\
\nonumber&\leq  f_2^{N+1}-f_2^j-\left\langle g_1^{j+1}, x^{N+1}-x^j\right\rangle \ \  j\in\left\{1, \ldots, N\right\}\\
& \nonumber\tfrac{1}{2(1-\tfrac{\mu_2}{L_2})}\left(\tfrac{1}{L_2}\left\|g_1^{i+1}-g_2^{N+1}\right\|^2+\mu_2\left\|x^i-x^{N+1}\right\|^2-\tfrac{2\mu_2}{L_2}\left\langle g_2^{N+1}-g_1^{i+1},x^{N+1}-x^i\right\rangle\right)\\
\nonumber&\leq f_2^i-f_2^{N+1}-\left\langle g_2^{N+1}, x^i-x^j\right\rangle \ \ i\in\left\{1, \ldots, N\right\}   \\
\nonumber& \ f_1^k-f_2^k\geq f^\star  \ \ k\in\left\{1, \ldots, N+1\right\}\\
\nonumber  &  f_1^1-f_2^1-f^\star \leq \Delta.
\end{align}
}}

Note that we do not employ Lemma \ref{Lem_de} in this formulation because we consider a general DC problem. Using the performance estimation procedure as described before the proof  of  Theorem \ref{theorem 1} once more, we obtain the following result.
\begin{theorem}\label{theorem 2}
Let $f_1\in\mathcal{F}_{\mu_1, L_1}({\mathbb{R}^n})$ and $f_2\in\mathcal{F}_{\mu_2, L_2}({\mathbb{R}^n})$. Then, after $N$ iterations
of Algorithm \ref{Alg}, one has
\begin{align}
&\min_{1\leq k\leq N} f_1(x^k)-f_1(x^{k+1})-\langle g_2^k, x^k-x^{k+1}\rangle\leq \\
&\nonumber \min\left\{\frac{L_1}{N(L_1+\mu_2)}, \frac{L_2}{N(L_2+\mu_1)-\mu_1} \right\}\left(f(x^1)-f^\star \right).
\end{align}
\end{theorem}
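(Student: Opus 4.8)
The plan is to prove this directly rather than through the semidefinite relaxation of \eqref{P3.A2} — though, as I note at the end, the two are equivalent. Write $T_k:=f_1(x^k)-f_1(x^{k+1})-\langle g_2^k,x^k-x^{k+1}\rangle$ for the quantity being minimized and introduce its companion $R_k:=f_2(x^{k+1})-f_2(x^k)-\langle g_2^k,x^{k+1}-x^k\rangle$. Both are nonnegative: $g_2^k=g_1^{k+1}\in\partial f_1(x^{k+1})$ gives $T_k\ge 0$, and $g_2^k\in\partial f_2(x^k)$ gives $R_k\ge 0$. The starting point is the exact identity $f(x^k)-f(x^{k+1})=T_k+R_k$: expanding $f=f_1-f_2$, the two copies of $\langle g_2^k,x^k-x^{k+1}\rangle$ cancel and what is left is $[f_1(x^k)-f_2(x^k)]-[f_1(x^{k+1})-f_2(x^{k+1})]$. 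Summing over $k=1,\ldots,N$ and using $f(x^{N+1})\ge f^\star$ gives $\sum_{k=1}^N (T_k+R_k)\le f(x^1)-f^\star$, so everything reduces to bounding $\sum_k R_k$ from below by a multiple of $\min_{1\le k\le N}T_k$, in two ways — one for each entry of the claimed minimum.

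For the entry $\tfrac{L_1}{N(L_1+\mu_2)}(f(x^1)-f^\star)$: viewing $T_k=D_{f_1}(x^k,x^{k+1})$ and $R_k=D_{f_2}(x^{k+1},x^k)$ as Bregman distances, $L_1$-smoothness of $f_1$ gives $T_k\le \tfrac{L_1}{2}\|x^k-x^{k+1}\|^2$ and $\mu_2$-strong convexity of $f_2$ gives $R_k\ge \tfrac{\mu_2}{2}\|x^{k+1}-x^k\|^2$; hence $R_k\ge \tfrac{\mu_2}{L_1}T_k$, i.e. $f(x^k)-f(x^{k+1})\ge \tfrac{L_1+\mu_2}{L_1}T_k$. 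Summing and bounding each $T_k$ below by $\min_{1\le k\le N}T_k$ gives the bound. (If $L_1=\infty$ this chain is skipped and one keeps only the weaker $\tfrac1N$ bound, consistent with the conventions for $\tfrac{a\infty+b}{c\infty-d\infty}$.)

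For the entry $\tfrac{L_2}{N(L_2+\mu_1)-\mu_1}(f(x^1)-f^\star)$: I would now pair $R_k$ with $T_{k+1}$ instead of $T_k$. Using $g_1^{k+1}=g_2^k$, $g_1^{k+2}=g_2^{k+1}$ and Bregman duality, $T_{k+1}=D_{f_1}(x^{k+1},x^{k+2})=D_{f_1^*}(g_2^{k+1},g_2^k)$; since $\mu_1$-strong convexity of $f_1$ makes $f_1^*$ a $\tfrac1{\mu_1}$-smooth function (as in $f_1^*\in\mathcal{F}_{L_1^{-1},\mu_1^{-1}}$, used in Proposition \ref{Por_N1}), this gives $T_{k+1}\le \tfrac1{2\mu_1}\|g_2^{k+1}-g_2^k\|^2$. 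On the other hand $L_2$-smoothness and convexity of $f_2$ give $R_k=D_{f_2}(x^{k+1},x^k)\ge \tfrac1{2L_2}\|g_2^{k+1}-g_2^k\|^2$ (Theorem \ref{T1} with $\mu=0$). Hence $R_k\ge \tfrac{\mu_1}{L_2}T_{k+1}$ for $k=1,\ldots,N-1$, so $\sum_{k=1}^N (T_k+R_k)\ge \sum_{k=1}^N T_k+\tfrac{\mu_1}{L_2}\sum_{k=2}^N T_k\ge \bigl(N+\tfrac{(N-1)\mu_1}{L_2}\bigr)\min_{1\le k\le N}T_k$, which together with $\sum_k (T_k+R_k)\le f(x^1)-f^\star$ yields $\min_k T_k\le \tfrac{L_2}{NL_2+(N-1)\mu_1}(f(x^1)-f^\star)$ — the claimed quantity. (Again, if $\mu_1=0$ or $L_2=\infty$ one drops the $R_k$ contribution and is left with the $\tfrac1N$ bound.)

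The delicate point is this second constant. The ``aligned'' pairing $R_k\leftrightarrow T_k$ does not involve strong convexity of $f_1$ at all, so one is forced into the ``shifted'' pairing $R_k\leftrightarrow T_{k+1}$; the identity $D_{f_1}(x^{k+1},x^{k+2})=D_{f_1^*}(g_2^{k+1},g_2^k)$ — valid for general subgradients, so $f_1$ need not be differentiable even though $1/\mu_1$-smoothness of $f_1^*$ is used — is what makes strong convexity of $f_1$ exploitable. Because the shifted pairing supplies only $N-1$ extra terms (the index $k=N$ has no partner $T_{N+1}$), one gains $(N-1)\mu_1/L_2$ rather than $N\mu_1/L_2$, which is the source of the $-\mu_1$ in the denominator. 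Finally, this whole argument can be recast, in the style of the rest of the paper, as exhibiting an explicit dual-feasible point for the SDP relaxation of \eqref{P3.A2}: uniform multipliers on the $N$ inequalities $f_1(x^k)-f_1(x^{k+1})-\langle g_1^{k+1},x^k-x^{k+1}\rangle\ge \ell$, the coefficients implicit above on the $f_1$- and $f_2$-interpolation inequalities, and weak SDP duality.
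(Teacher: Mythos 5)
Your proof is correct, and it takes a genuinely different route from the paper. The paper proves Theorem \ref{theorem 2} in the performance-estimation style: for each of the two bounds it writes down a guessed set of dual multipliers for the relaxation \eqref{P3.A2}, aggregates the interpolation constraints with those weights, and verifies that the result collapses to a nonpositive sum of squares. Your argument instead starts from the exact decomposition $f(x^k)-f(x^{k+1})=T_k+R_k$ with $T_k,R_k\ge 0$, telescopes, and then lower-bounds $R_k$ by a multiple of a $T$-term in two ways: the aligned pairing $R_k\ge\tfrac{\mu_2}{L_1}T_k$ (descent lemma on $f_1$ plus strong convexity of $f_2$) and the shifted pairing $R_k\ge\tfrac{\mu_1}{L_2}T_{k+1}$ (co-coercivity of $f_2$ plus the conjugate-smoothness bound $T_{k+1}\le\tfrac{1}{2\mu_1}\|g_2^{k+1}-g_2^k\|^2$, which is valid for subgradients of a $\mu_1$-strongly convex $f_1$). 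All the ingredients check out: $g_2^k=g_1^{k+1}\in\partial f_1(x^{k+1})$ justifies both $T_k\ge0$ and the duality step, the index bookkeeping correctly yields only $N-1$ shifted pairs and hence the $-\mu_1$ in the denominator, and the degenerate cases $\mu_1=0$, $L_1=\infty$, $L_2=\infty$ are handled. It is worth noting that your aggregation weights are exactly the paper's dual multipliers in disguise: e.g.\ for the second bound, normalizing your inequality $\bigl(N+(N-1)\tfrac{\mu_1}{L_2}\bigr)\min_k T_k\le f(x^1)-f^\star$ puts weight $B_2=\tfrac{L_2}{N(L_2+\mu_1)-\mu_1}$ on $T_1$ and $\tfrac{1-B_2}{N-1}$ on each $T_k$, $k\ge2$, which are precisely the multipliers appearing in the paper's certificate. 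What your version buys is interpretability and self-containedness (the origin of each constant is visible, no identity needs to be verified by brute force); what the paper's version buys is uniformity with the rest of its PEP methodology and the reassurance, via the numerically obtained SDP dual, that no better bound is available from this relaxation.
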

{\it Proof }
 We show separately that $\tfrac{L_1(f(x^1)-f^\star )}{N(L_1+\mu_2)}$ and $\frac{L_2(f(x^1)-f^\star )}{N(L_2+\mu_1)-\mu_1}$ are upper bounds for problem \eqref{P3.A2}. The proof is analogous to that of Theorem \ref{theorem 1}. First, consider the bound $\tfrac{L_1(f(x^1)-f^\star )}{N(L_1+\mu_2)}$. Since the given bound does not depend on $\mu_1$ and $L_2$, we may assume without loss of generality that $L_2=\infty$ and $\mu_1=0$. Suppose that $B_1=\tfrac{L_1}{N(L_1+\mu_2)}$.
 With some algebra, one can show that
 {\scriptsize{
\begin{align*}
&\ell-B_1\Delta+\tfrac{1}{N}\sum_{k=1}^{N} \left( f_1^k-f_1^{k+1}-\langle g_1^{k+1}, x^k-x^{k+1}\rangle-\ell\right)+B_1\left(f_1^{N+1}- f_2^{N+1}-f^\star\right)+\\
&  B_1\left( f^\star-f_1^1+f^1_2+\Delta\right)
 +(\tfrac{1}{N}-B_1)\sum_{k=1}^{N} \left( f_1^{k+1}-f_1^{k}-\left\langle g_1^{k}, x^{k+1}-x^{k}\right\rangle-\tfrac{1}{2L_1}\left\|g_1^{k+1}-g_1^{k}\right\|^2\right)\\
& +B_1\sum_{k=1}^{N} \left( f_2^{k+1}-f_2^{k}-\left\langle g_1^{k+1}, x^{k+1}-x^{k}\right\rangle-\tfrac{\mu_2}{2}\left\|x^{k+1}-x^{k}\right\|^2\right)\\
&= -\tfrac{B_1\mu_2}{2}\sum_{k=1}^N\left\|x^k-x^{k+1}-\tfrac{1}{L_1}(g_1^k-g_1^{k+1})\right\|^2
\leq0.
\end{align*}
}}
The rest of proof is similar to that of  Theorem \ref{theorem 1}. Now, we consider the bound $\tfrac{L_2(f(x^1)-f^\star )}{N(L_2+\mu_1)-\mu_1}$. Without loss generality, we may assume that $L_1=\infty$ and $\mu_2=0$. By doing some calculus, one can show that
 {\small{
\begin{align*}
&\ell-B_2\Delta+ B_2\left( f_1^1-f_1^{2}-\left\langle g^2_1, x^1-x^{2}\right\rangle-\ell\right)+ B_2\left(f_1^{N+1}- f_2^{N+1}-f^\star\right) \\
&+B_2\left( f^\star-f_1^1+f^1_2+\Delta\right)+\tfrac{1-B_2}{N-1}\sum_{k=2}^{N} \left( f_1^k-f_1^{k+1}-\left\langle g_1^{k+1}, x^k-x^{k+1}\right\rangle-\ell\right) \\ & +\alpha\sum_{k=2}^{N} \left( f_1^{k+1}-f_1^{k}-\left\langle g_1^{k}, x^{k+1}-x^{k}\right\rangle-\tfrac{\mu_1}{2}\left\|x^{k+1}-x^{k}\right\|^2\right)\\
& +B_2\sum_{k=1}^{N} \left( f_2^{k+1}-f_2^{k}-\left\langle g_1^{k+1}, x^{k+1}-x^{k}\right\rangle-\tfrac{1}{2L_2}\left\|g_1^{k+2}-g_1^{k+1}\right\|^2\right)\\
& +B_2\left( f_2^{N+1}-f_2^{N}-\left\langle g_1^{N+1}, x^{N+1}-x^{N}\right\rangle-\tfrac{1}{2L_2}\left\|g_2^{N+1}-g_1^{N+1}\right\|^2\right)\\
&=-\tfrac{B_2}{2L_2}\left\|g_2^{N+1}-g_1^{N+1}\right\|^2-\tfrac{B_2}{2L_2}\sum_{k=2}^{N} \left\|g_1^k-g_1^{k+1}-\tfrac{\alpha L_2}{B_2}(x^{k}-x^{k+1})  \right\|^2
\leq0,
\end{align*}
}}
where $B_2=\tfrac{L_2}{N(L_2+\mu_1)-\mu_1}$ and $\alpha=\tfrac{1-B_2}{N-1}-B_2$.  Since we assume $L_2>\mu_1$, we have $B_2, \alpha\geq 0$. The rest of the proof runs as before.
\qed

The important point is that the last result provides a rate of convergence even if neither $L_1$ nor $L_2$ is finite, and we therefore state it as
a corollary.

\begin{corollary}
Let $f_1\in\mathcal{F}_{\mu_1, \infty}({\mathbb{R}^n})$ and $f_2\in\mathcal{F}_{\mu_2, \infty}({\mathbb{R}^n})$, i.e.\ consider any
 DC decomposition in problem \eqref{P}. Then, after $N$ iterations
of Algorithm \ref{Alg}, one has
\[
\min_{1\leq k\leq N} f_1(x^k)-f_1(x^{k+1})-\langle g_2^k, x^k-x^{k+1}\rangle\leq
 \frac{1}{N} \left(f(x^1)-f^\star \right).
\]
\end{corollary}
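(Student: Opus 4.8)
The plan is to derive this directly from Theorem \ref{theorem 2}, since the corollary is merely that theorem specialized to the ``fully nonsmooth'' regime. First I would recall that every closed proper convex function is $0$-strongly convex and $\infty$-smooth, so the hypotheses $f_1\in\mathcal{F}_{\mu_1,\infty}(\mathbb{R}^n)$ and $f_2\in\mathcal{F}_{\mu_2,\infty}(\mathbb{R}^n)$ impose nothing beyond $f_1,f_2$ being closed proper convex; in particular they also lie in $\mathcal{F}_{0,\infty}(\mathbb{R}^n)$. Hence Theorem \ref{theorem 2} is applicable here, and it remains only to evaluate its right-hand side in this case.

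The one computational point is to see how the bound of Theorem \ref{theorem 2} behaves once the Lipschitz constants are infinite. Applying Theorem \ref{theorem 2} with the (admissible) choice $\mu_1=\mu_2=0$, the two competing quantities $\tfrac{L_1}{N(L_1+\mu_2)}$ and $\tfrac{L_2}{N(L_2+\mu_1)-\mu_1}$ both reduce to $\tfrac1N$, so their minimum is $\tfrac1N$ and the bound becomes $\tfrac1N\bigl(f(x^1)-f^\star\bigr)$. Equivalently, keeping $\mu_1,\mu_2$ arbitrary, the map $L\mapsto \tfrac{L}{N(L+\mu_2)}$ is nondecreasing in $L$ with supremum $\tfrac1N$, and by the arithmetic conventions for $\infty$ fixed in Section \ref{Sec.Def} one reads off $\tfrac{L_1}{N(L_1+\mu_2)}=\tfrac1N$ when $L_1=\infty$ (and similarly for the second term); either route collapses the bound to $\tfrac1N\bigl(f(x^1)-f^\star\bigr)$. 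Finally, since the left-hand side of the corollary is exactly $\min_{1\le k\le N}T(x^{k+1})$ in the notation of \eqref{termination2}, this is precisely the claimed inequality.

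I do not expect any genuine obstacle: the only points needing a word of care are that $f(x^1)-f^\star\ge 0$ (true because $f^\star$ is a lower bound of $f$), so that the right-hand side is a bona fide nonnegative upper bound, and that $\mu_1=\mu_2=0$ is a legitimate input to Theorem \ref{theorem 2}. As a sanity check one could also redo the aggregation in the proof of Theorem \ref{theorem 2} directly with $L_1=L_2=\infty$: the smoothness-interpolation inequalities then drop out, only the convexity inequalities of Theorem \ref{T1} survive, and with the dual weight $B_1=\tfrac1N$ the residual sum-of-squares term (which carries the factor $\mu_2$) vanishes, giving the same bound.
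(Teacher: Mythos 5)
Your proposal is correct and matches the paper's (implicit) argument exactly: the corollary is stated as an immediate specialization of Theorem \ref{theorem 2} to $L_1=L_2=\infty$, where both candidate constants $\tfrac{L_1}{N(L_1+\mu_2)}$ and $\tfrac{L_2}{N(L_2+\mu_1)-\mu_1}$ collapse to $\tfrac1N$ under the paper's arithmetic conventions for $\infty$. The observation that one may equivalently take $\mu_1=\mu_2=0$ (since any closed proper convex function lies in $\mathcal{F}_{0,\infty}(\mathbb{R}^n)$) is a valid alternative route to the same evaluation.
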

This result is new to the best of our knowledge.

\section{Linear convergence of the DCA under the  Polyak-\L ojasiewicz  inequality}\label{L_C_PL}
In the section, we provide some sufficient conditions under which the DCA is linearly convergent. Similar to the former sections, we employ the performance estimation for obtaining convergence rate.

In recent years, the linear convergence of some optimization methods for non-convex problems have been investigated under
 the  Polyak-\L ojasiewicz (PL) inequality; see \cite{abbaszadehpeivasti2022PL,bolte2017eror,Karimi} and the reference therein.
  We say that $f$ satisfies PL inequality on $X$  if there exists $\eta>0$ such that
\begin{align}\label{PL_i}
 f(x)-f^\star\leq \tfrac{1}{2\eta} \| \xi \|^2, \ \ \forall x\in X, \forall \xi\in\co(\partial_l f(x)).
 \end{align}
Note that when $f$ is differentiable inequality \eqref{PL_i} is a special case of \eqref{Lojasiewicz}
with $\theta=\tfrac{1}{2}$ and different ground set.
If $f_1$ or $f_2$ is strictly differentiable,
we have have $\co(\partial_l f)=\partial f_1-\partial f_2$; see Example 10.10 in \cite{rockafellar2009variational}.
 Hence, the performance estimation problem with the PL inequality may be formulated as follows:
{{
\begin{align}\label{P33}
\nonumber   \max & \ \frac{(f_1^2-f_2^2)-f^\star}{(f_1^1-f_2^1)-f^\star}\\
\nonumber \st \ & \tfrac{1}{2(1-\tfrac{\mu_1}{L_1})}\left(\tfrac{1}{L_1}\left\|g_1^i-g_1^j\right\|^2+\mu_1\left\|x^i-x^j\right\|^2-\tfrac{2\mu_1}{L_1}\left\langle g_1^j-g_1^i,x^j-x^i\right\rangle\right)\\
&\nonumber\leq f_1^i-f_1^j-\left\langle g_1^j, x^i-x^j\right\rangle \ \ i, j\in\left\{1, 2\right\}  \\
& \nonumber\tfrac{1}{2(1-\tfrac{\mu_2}{L_2})}\left(\tfrac{1}{L_2}\left\|g_2^i-g_2^j\right\|^2+\mu_2\left\|x^i-x^j\right\|^2-\tfrac{2\mu_2}{L_2}\left\langle g_2^j-g_2^i,x^j-x^i\right\rangle\right)\\
&\leq f_2^i-f_2^j-\left\langle g_2^j, x^i-x^j\right\rangle \ \ i, j\in\left\{1, 2\right\}    \\
\nonumber& \ f_1^k-f_2^k\geq f^\star  \ \ k\in\left\{1, 2\right\}\\
\nonumber & g_2^1=g_1^2\\
\nonumber  & \ \left(f_1^k-f_2^k\right)-f^\star\leq \tfrac{1}{2\eta}\|g_1^k-g_2^k\|^2,  \ \ k\in\left\{1, 2\right\}.
\end{align}
}}
By doing constraint aggregation in problem \eqref{P33} as before (i.e.\ demonstrating a dual feasible solution and using weak duality),
 we obtain the following linear convergence rate for the DCA under the PL inequality.
 \begin{theorem}\label{theorem_PL}
Let $f_1\in\mathcal{F}_{\mu_1, L_1}({\mathbb{R}^n})$ and $f_2\in\mathcal{F}_{\mu_2, L_2}({\mathbb{R}^n})$. If
    $L_1$ or $L_2$ is finite and if $f$ satisfies PL inequality on $X=\{x: f(x)\leq f(x^1)\}$, then for $x^2$  from Algorithm \ref{Alg}, we have
\begin{align}
\frac{f(x^2)-f^\star}{f(x^1)-f^\star}\leq \left(\frac{1-\frac{\eta}{L_1}}{1+\frac{\eta}{L_2}}\right).
\end{align}\label{DCA_linear_bound}
\end{theorem}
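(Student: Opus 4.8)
The plan is to certify the bound by a weak-duality argument on the performance estimation problem \eqref{P33}, exactly in the style of the proof of Theorem \ref{theorem 1}: I will aggregate a handful of its constraints with explicit nonnegative multipliers and check that the aggregate collapses, with no residual term, to the asserted contraction. Write $\rho:=\frac{1-\eta/L_1}{1+\eta/L_2}$. Since $\rho$ involves neither $\mu_1$ nor $\mu_2$, it suffices to invoke Theorem \ref{T1} only in its $\mu=0$ (co-coercivity) form, which is implied by $f_1\in\mathcal{F}_{\mu_1,L_1}$, $f_2\in\mathcal{F}_{\mu_2,L_2}$; moreover, the hypothesis that $L_1$ or $L_2$ is finite makes one of $f_1,f_2$ continuously differentiable, which is what legitimizes the PL constraints of \eqref{P33} via the identity $\co(\partial_l f)=\partial f_1-\partial f_2$ recalled before the theorem. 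Throughout I use the DCA optimality relation $g_2^1=g_1^2$ (i.e.\ $g_2^1\in\partial f_1(x^2)$), which is one of the constraints of \eqref{P33}.

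First I would produce a ``descent-with-bonus'' inequality. Applying the $\mu=0$ case of Theorem \ref{T1} to $f_1$ with $(i,j)$ corresponding to the points $(x^1,x^2)$, and to $f_2$ with $(i,j)$ corresponding to $(x^2,x^1)$, gives
\begin{align*}
\tfrac{1}{2L_1}\|g_1^1-g_1^2\|^2 &\le f_1(x^1)-f_1(x^2)-\langle g_1^2,x^1-x^2\rangle,\\
\tfrac{1}{2L_2}\|g_2^1-g_2^2\|^2 &\le f_2(x^2)-f_2(x^1)-\langle g_2^1,x^2-x^1\rangle.
\end{align*}
Adding them and using $g_1^2=g_2^1$ to cancel the two inner-product terms yields
\[
f(x^1)-f(x^2)\ \ge\ \tfrac{1}{2L_1}\|g_1^1-g_1^2\|^2+\tfrac{1}{2L_2}\|g_2^1-g_2^2\|^2\ \ge\ 0 ,
\]
so in particular $f(x^2)\le f(x^1)$, i.e.\ $x^2\in X$, and the PL inequality \eqref{PL_i} may legitimately be invoked at both $x^1$ and $x^2$.

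Next I would bring in the PL inequality. At $x^1$ it reads $f(x^1)-f^\star\le\tfrac{1}{2\eta}\|g_1^1-g_2^1\|^2=\tfrac{1}{2\eta}\|g_1^1-g_1^2\|^2$, and at $x^2$ it reads $f(x^2)-f^\star\le\tfrac{1}{2\eta}\|g_1^2-g_2^2\|^2=\tfrac{1}{2\eta}\|g_2^1-g_2^2\|^2$, using $g_1^2=g_2^1$ in both. Substituting the resulting lower bounds $\|g_1^1-g_1^2\|^2\ge 2\eta(f(x^1)-f^\star)$ and $\|g_2^1-g_2^2\|^2\ge 2\eta(f(x^2)-f^\star)$ into the displayed inequality gives
\[
f(x^1)-f(x^2)\ \ge\ \tfrac{\eta}{L_1}\bigl(f(x^1)-f^\star\bigr)+\tfrac{\eta}{L_2}\bigl(f(x^2)-f^\star\bigr),
\]
and, writing $f(x^1)-f(x^2)=\bigl(f(x^1)-f^\star\bigr)-\bigl(f(x^2)-f^\star\bigr)$ and collecting terms, this is $\bigl(1-\tfrac{\eta}{L_1}\bigr)(f(x^1)-f^\star)\ge\bigl(1+\tfrac{\eta}{L_2}\bigr)(f(x^2)-f^\star)$, which is the claim after dividing by $\bigl(1+\tfrac{\eta}{L_2}\bigr)(f(x^1)-f^\star)>0$. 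In the language of \eqref{P33}: the dual-feasible multipliers are $1$ on each of the two interpolation inequalities and $\tfrac{\eta}{L_1},\tfrac{\eta}{L_2}$ on the two PL inequalities, and once $g_2^1=g_1^2$ is substituted every squared-norm term cancels identically, so the aggregated constraint is precisely $f(x^2)-f^\star\le\rho\,(f(x^1)-f^\star)$ with zero slack.

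Because the computation is so short, there is no genuinely hard step; the points that need care are: (i) justifying that $g_1^k-g_2^k$ is an admissible vector in \eqref{PL_i}, which is where strict differentiability of $f_1$ or $f_2$ (guaranteed by $\min(L_1,L_2)<\infty$) is used together with Example~10.10 of \cite{rockafellar2009variational}; (ii) the boundary cases $L_1=\infty$ or $L_2=\infty$, in which the corresponding co-coercivity term and the corresponding factor $\eta/L_i$ both vanish under the convention $\tfrac{1}{\infty}=0$, so the argument is unchanged; and (iii) the degenerate case $f(x^1)=f^\star$, where the descent inequality forces $f(x^2)=f^\star$ and the ratio is read as $0$. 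One should also note that feasibility of any instance of \eqref{P33} forces $\eta\le L_1$ (otherwise the displayed chain is inconsistent with $f(x^1)>f^\star$), so $\rho\in[0,1)$ and the bound is non-vacuous.
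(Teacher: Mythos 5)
Your proof is correct and is essentially the paper's own argument: you aggregate the same two interpolation inequalities and the two PL constraints with the same multipliers (up to the common factor $\tfrac{1}{1+\eta/L_2}$), merely presenting the cancellation as a forward chain of inequalities rather than as a single identity summing to zero. The one small addition is that you explicitly verify $f(x^2)\le f(x^1)$ so that $x^2\in X$ before invoking the PL inequality there, a point the paper's proof leaves implicit.
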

\begin{proof}
  Since the given bound is independent of $\mu_1$ and $\mu_2$, without loss of generality,
   we assume that $\mu_1=\mu_2=0$. In addition, we assume that $f^\star=0$. Direct calculation shows that
\begin{align*}
&{\left(f_1^2-f_2^2\right)-f^\star}-\left(\frac{1-\frac{\eta}{L_1}}{1+\frac{\eta}{L_2}}\right)\left( {\left(f_1^1-f_2^1\right)-f^\star} \right)+\left(\frac{1}{1+\frac{\eta}{L_2}}\right)\times \\
&\left( f_1^1-f_1^{2}-\left\langle g_1^{2}, x^1-x^{2}\right\rangle-\tfrac{1}{2L_1}\left\|g_1^1-g_1^{2}\right\|^2\right)\\
&+\left(\frac{1}{1+\frac{\eta}{L_2}}\right)\left( f_2^2-f_2^{1}-\left\langle g_1^{2}, x^2-x^{1}\right\rangle-\tfrac{1}{2L_2}\left\|g_1^2-g_2^{2}\right\|^2\right)+\left(\frac{\frac{\eta}{L_1}}{1+\frac{\eta}{L_2}}\right)\times \\
&\left(\frac{1}{2\eta}\left\|g_1^1-g_1^2\right\|^2-f_1^1+f_2^1\right)+\left(\frac{\frac{\eta}{L_2}}{1+\frac{\eta}{L_2}}\right)\left(\frac{1}{2\eta}\left\|g_1^2-g_2^2\right\|^2-f_1^2+f_2^2\right)=0.
\end{align*}
As all the multipliers in the last expression are non-negative, for any feasible solution of problem \eqref{P3}, we have
$$
f(x^2)-f^\star- \left(\frac{1-\frac{\eta}{L_1}}{1+\frac{\eta}{L_2}}\right)\left(f(x^1)-f^\star  \right)\leq 0,
$$
completing the proof.
\end{proof}
Note that Theorem \ref{le2018convergence} by Le Thi et al. \cite{le2018convergence} does not imply
 Theorem \ref{theorem_PL} if inequality \eqref{Lojasiewicz} holds on $\{x: f(x)\leq f(x^1)\}$
  with $\theta=\tfrac{1}{2}$, since we assume neither strong convexity of $f_1$ or $f_2$, nor boundedness of the sequence of iterates.
   Moreover, we give explicit expressions for the constants that determine the linear convergence rate of the sequence of objective values.

\section{Conclusion}
We have shown that the performance estimation framework of Drori and Teboulle \cite{drori2014performance} yields new insights into the convergence
behavior of the Difference-of-convex algorithm (DCA).
As future work, one may also consider the convergence of the DCA on more restricted classes of DC problems, e.g.\
where $f_1$ and $f_2$ are convex polynomials, as studied in \cite{ahmadi2018dc}.
For constrained problems, even the case where $f_1$ and $f_2$ are quadratic polynomials is of interest, e.g.\ in the study
 of (extended) trust region problems.

\subsection*{Acknowledgement}
This work was supported by the
 Dutch Scientific Council (NWO)  grant \emph{Optimization for and with Machine Learning}, OCENW.GROOT.2019.015.

\end{document}